\documentclass[a4paper, 11pt, reqno]{amsart}
\usepackage[usenames,dvipsnames]{color}
\usepackage{amssymb, amsmath, latexsym, graphics, graphicx, pstricks}

\newcommand\GreenR{\mathcal{R}}
\newcommand\GreenH{\mathcal{H}}

\newcommand\trop{\mathbb{T}}

\newcommand\ft{\mathbb{FT}}

\newcommand\pft{\mathbb{PFT}}

\newtheorem{theorem}{Theorem}[section]
\newtheorem{lemma}[theorem]{Lemma}
\newtheorem{proposition}[theorem]{Proposition}

\newtheorem{corollary}[theorem]{Corollary}

\begin{document}
\title[Idempotent tropical matrices and finite metric spaces]{Idempotent tropical matrices and \\ finite metric spaces}
\maketitle

\begin{center}

    MARIANNE JOHNSON\footnote{Email \texttt{Marianne.Johnson@maths.manchester.ac.uk}.}
and
 MARK KAMBITES\footnote{Email \texttt{Mark.Kambites@manchester.ac.uk}.}

    \medskip

    School of Mathematics, \ University of Manchester, \\
    Manchester M13 9PL, \ England.

\keywords{}
\thanks{}

\end{center}

\numberwithin{equation}{section}
\begin{abstract}
There is a well known correspondence between the triangle inequality for a distance function on a finite set, and
idempotency of an associated matrix over the tropical semiring. Recent research has shed new light
on the structure  (algebraic, combinatorial and geometric) of tropical idempotents, and in this paper
we explore the consequences of this for the metric geometry of tropical polytopes.
We prove, for example, that every $n$-point metric space is realised by the Hilbert projective metric on the vertices
of a pure $n$-dimensional tropical polytope in tropical $n$-space. More generally, every $n$-point
asymmetric distance function is realised by a residuation operator on the vertices of
such a polytope. In the symmetric case, we show that the maximal group of tropical matrices containing the
idempotent associated to a metric space is isomorphic to $G \times \mathbb{R}$, where $G$ is the isometry group
of the space; it follows that every group of the form $G \times \mathbb{R}$ with $G$ finite arises as a maximal
subgroup of a sufficiently large finitary full tropical matrix semigroup. In the process we also prove some new results
about tropical idempotent matrices, and note some semigroup-theoretic consequences which
may be of independent interest.
\end{abstract}

\section{Introduction}
Recently there has been increasing interest in using tropical methods in
finite metric geometry.
Given a finite ordered set $X$ and a function $d : X \times X \to \mathbb{R}$ satisfying $d(x,x) = 0$
for all $x$, we may consider the $|X| \times |X|$ matrix $D$ whose entries are given by the
function $-d$ as a matrix over the max-plus semiring. It is well known (see for example
\cite{Develin04}) that $d$ satisfies the triangle inequality if and only if the matrix is
\textit{idempotent}, that is, $D \otimes D = D$ as max-plus matrices. Hence it is easy to see
that $d$ is a metric if and only if $D$ is idempotent, non-positive and
symmetric with zeros exactly on the diagonal.

There have also been significant recent advances in understanding the algebraic structure of tropical
matrices and polytopes. In particular, work of Izhakian and the present authors \cite{K_puredim} has
yielded new insight into the properties of projective tropical polytopes, and hence of tropical
idempotent matrices. Our main aim in the present paper is to put this insight to work by studying
tropical representations of finite metric spaces. For example, we shall see that every $n$-point
finite metric space can be realised as the Hilbert projective metric on the vertex set of a pure $n$-dimensional
tropical (and Euclidean) convex polytope in tropical projective $(n-1)$-space (a \textit{polytrope}, in
the language of \cite{Joswig10}). This gives a stark contrast between tropical and Euclidean convex geometry, since
there are metric spaces on four points which cannot be embedded into Euclidean space of any dimension (see
Section~\ref{sec_examples} below for an example).

Our results also have consequences for the theory of tropical matrix semigroups and groups. Recent work of Izhakian
and the authors \cite{K_tropgroups} has shown that every maximal subgroup of the $n \times n$ finitary tropical matrix
semigroup has the form $G \times \mathbb{R}$ for some finite group $G$. We show that the maximal subgroup around the
idempotent associated to any finite metric space is naturally isomorphic to $G \times \mathbb{R}$ where $G$ is the isometry
group of the space. Since every finite group is the isometry group of a finite metric space \cite{Asimov76}, this means that
the maximal subgroups of all full square tropical matrix semigroups are exactly the groups of the form $G \times \mathbb{R}$
with $G$ finite.

From an abstract algebraic viewpoint, idempotency of a tropical matrix is an extremely natural condition,
but symmetry seems perhaps a more artificial imposition. Removing the requirement for symmetry leads
(modulo some technicalities, to be described below) to matrices representing asymmetric distance
functions, which we shall term \textit{semimetrics}\footnote{Terminology for the various possible
generalisations of a metric is not standardised; we caution that the functions we consider are sometimes termed by others ``premetrics'' or ``quasi-metrics'', while the term ``semimetric'' is used by some authors for a distance
function with the triangle inequality relaxed.}. Such functions are clearly ubiquitous in nature, and
in particular occur in many areas of applied mathematics. In recent years they
have also begun to emerge more often in pure mathematics (see for example \cite{K_semimetric}). To date,
however, they have not achieved prominence as objects of pure mathematical study, and there is no
coherent subject of ``asymmetric geometry''. We believe this reflects not a lack of importance, but
rather a lack of effective methods. It transpires that the tropical representation of finite metrics
can be extended to semimetrics, by replacing the
Hilbert projective metric on projective tropical $(n-1)$-space with a suitable
\textit{residuation operator}
on \textit{affine} tropical $n$-space. We believe this may provide a
useful tool for studying semimetrics.

In addition to this introduction, this article comprises seven sections.
In Section~\ref{sec_prelim} we recap some foundational definitions and
summarise some required results from \cite{K_puredim}. Section~\ref{sec_idpt} establishes some basic
facts about the structure of idempotent matrices.
In Section~\ref{sec_semimetric} we give characterisations of finite metrics and semimetrics in
terms of geometric properties of the associated idempotent matrices.
Section~\ref{sec_residuation} shows that every finite semimetric on $n$
points can be embedded into affine tropical $n$-space.
Section~\ref{sec_duality} discusses the relationship between finite metric spaces and tropical polytopes
that are convex in the ordinary sense, and also derives some semigroup-theoretic consequences.
Section~\ref{sec_groups} proves our results about maximal subgroups of full tropical matrix semigroups.
 Finally,
Section~\ref{sec_examples} studies some low-dimensional examples which illustrate our results; these are
collected at the end for ease of reference to diagrams, but the reader may also wish to consult them while reading the earlier
sections.

\section{Preliminaries}\label{sec_prelim}

We write $\ft$ for the set $\mathbb{R}$ equipped with the operations of maximum (denoted by $\oplus$) and addition (denoted by $\otimes$, by $+$ or
simply by juxtaposition). Thus, we write $a \oplus b = \max(a,b)$ and $a \otimes b = ab = a + b$. It is readily verified that $\ft$ is an abelian
group (with neutral element $0$) under $\otimes$ and a commutative semigroup of idempotents (without a neutral element) under $\oplus$, and
that $\otimes$ distributes over $\oplus$. These properties mean $\ft$ has the structure of an \textit{idempotent semifield} (without zero).

Let $M_n(\ft)$ denote the set of all $n \times n$ matrices with entries in $\ft$. The operations $\oplus$ and $\otimes$ can be extended in the obvious way to give corresponding operations on $M_n(\ft)$. (In particular, it is easy to see that $M_n(\ft)$ is a semigroup with respect to tropical matrix multiplication.)

We shall be interested in the space $\ft^n$ consisting of $n$-tuples $x$ with entries in $\ft$; we
write $x_i$ for the $i$th component of $x$. We call $\ft^n$ \textit{(affine) tropical $n$-space}.
The space $\ft^n$ admits
an addition and a scaling action of $\ft$ given by $(x\oplus y)_i = x_i \oplus y_i$ and
$(\lambda x)_i = \lambda (x_i)$ respectively. These operations give $\ft^n$
the structure of an \textit{$\ft$-module}\footnote{Some authors use the term \textit{semimodule}, to
emphasise the non-invertibility of addition, but since no other kind of module exists over $\ft$
we have preferred the more concise term.}. It also
has the structure of a lattice, under the partial order given by $x \leq y$ if $x_i \leq y_i$ for all $i$.

From affine tropical $n$-space we obtain \textit{projective tropical $(n-1)$-space}, denoted $\pft^{n-1}$,
by identifying two vectors if one is a tropical multiple of the other by an element of $\ft$. We
identify $\pft^{n-1}$ with $\mathbb{R}^{n-1}$ via the map
$$(x_1, \ldots, x_n) \mapsto (x_1-x_n, x_2 -x_n, \ldots,  x_{n-1} - x_n).$$

Submodules of $\ft^n$ (that is, subsets closed under tropical addition and scaling) are termed
\textit{(tropical) convex sets}. Finitely generated convex sets are called \textit{(tropical) polytopes}.
Since convex sets are closed under scaling, each convex set $X \subseteq \ft^n$ induces a subset of
$\pft^{n-1}$, termed the \textit{projectivisation} of $X$.

For $A \in M_n(\ft)$ we let $R(A)$ denote the tropical polytope in $\ft^n$ generated by the rows of $A$ and let $C(A)$ denote the tropical polytope in $\ft^n$ generated by the columns of $A$. (In the interest of brevity we shall ignore the distinction between row and column vectors,
regarding $R(A)$ and $C(A)$ as submodules of the same space $\ft^n$, whose elements will be written in the form $v = (v_1, \ldots, v_n)$.) We call these tropical polytopes the \emph{row space} and \emph{column space} of $A$ respectively.

A point $x$ in a convex set $X$ is called \textit{extremal in $X$} if the set
$$X \smallsetminus \lbrace \lambda \otimes x: \lambda \in \ft \rbrace$$
is a submodule of $X$. Clearly some scaling of every such extremal point must lie in every generating set for
$X$. In fact, every tropical polytope is generated by its extremal points considered up to scaling \cite{Butkovic07,Wagneur91}.

There are several important notions of dimension for convex sets. The \emph{tropical dimension} is the
topological dimension of the set, viewed as a subset of $\mathbb{R}^n$ with the usual topology. Note
that, in contrast to the classical (Euclidean) case, tropical convex sets may have regions of different
topological dimension. We say a set $X$ has \emph{pure} dimension $k$ if every open (within X with the
induced topology) subset of X has topological dimension $k$. The \emph{generator dimension} of a convex
set $X$ is the minimal cardinality of a generating subset, under the linear operations of scaling and
addition. (If $X$ is a polytope, this is equal to the number of extremal points of $X$ considered up to scaling.) The \emph{dual dimension}~\cite{K_puredim} is the minimal cardinality of a generating set under scaling and
the induced operation of greatest lower bound within the convex set. (Notice that, in general, the greatest lower bound of two elements within a convex set $X$ need not be the same as their component-wise minimum, which may not be contained in $X$.)

In \cite{K_puredim}, Izhakian and the present authors gave a characterisation of
\emph{projectivity} for tropical polytopes in terms of the geometric and order-theoretic structure on these sets. We briefly recall that a module $P$ is called \emph{projective} if every morphism from $P$ to another module $M$ factors through every surjective module morphism onto $M$. One of the
main results of \cite{K_puredim} can now be summarised as follows.

\begin{theorem}\cite[Theorems 1.1 and 4.5]{K_puredim}.
\label{IJKmain}
Let $X\subseteq \ft^n$ be a tropical polytope. Then the following are equivalent:
\begin{itemize}
\item[(i)] $X$ is projective as an $\ft$-module;
\item[(ii)] $X$ is the column space of an idempotent matrix in $M_n(\ft)$;
\item[(iii)] $X$ has pure dimension equal to its generator dimension and dual dimension.
\end{itemize}
\end{theorem}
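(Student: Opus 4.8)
The plan is to treat (i)$\Leftrightarrow$(ii) as a module-theoretic dictionary and (ii)$\Leftrightarrow$(iii) as the geometric core, expecting the latter to carry the real difficulty. For (ii)$\Rightarrow$(i), I would observe that an idempotent $E$ induces the $\ft$-linear operator $\Phi_E\colon \ft^n \to \ft^n$, $v \mapsto E \otimes v$, whose image is exactly $C(E)$ and which satisfies $\Phi_E \circ \Phi_E = \Phi_E$ by idempotency. Thus $\Phi_E$ is a linear retraction of $\ft^n$ onto $C(E)$, exhibiting $C(E)$ as a retract of the ambient space; projectivity of $C(E)$ then follows from projectivity of $\ft^n$ together with the fact that a retract of a projective module is projective. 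I would flag that verifying the ambient space behaves as a projective object is itself nontrivial here, because the absence of a zero element in $\ft$ means $\ft^n$ is not free in the naive sense and the usual ``projective $=$ summand of free'' slogan needs care.

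For the reverse implication, and for the substantive direction (iii)$\Rightarrow$(ii), I would produce an idempotent matrix from a canonical order-theoretic retraction of $\ft^n$ onto $X$. The map sending $v$ to the least element of $X$ lying above $v$ is scaling-equivariant and preserves $\oplus$, indeed preserves arbitrary suprema; as a sup-preserving endomorphism it is represented by a tropical matrix, and being a retraction onto $X$ it is idempotent, yielding an idempotent $E$ with $C(E)=X$. The crux is showing this retraction is genuinely everywhere-defined, finite-entried and linear, and here I expect hypothesis (iii) to be exactly what is needed: the note following the definitions warns that greatest lower bounds inside $X$ may differ from componentwise minima, and it is precisely the coincidence of generator, dual and pure dimensions that rules out the pathologies obstructing linearity of the projection.

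The geometric equivalence itself I would approach through the duality between the column space and row space of an idempotent matrix. For idempotent $E$, residuation sets up an order-reversing bijection between $C(E)$ and $R(E)$, and combined with the isomorphism between these spaces induced by the idempotent this renders $C(E)$ self-dual as a lattice; self-duality equates the count of join-irreducible extremal points (the generator dimension) with the count of meet-irreducible elements (the dual dimension). The genuinely hard part is the topology, namely showing that $C(E)$ has \emph{pure} dimension equal to this common value, and conversely that coincidence of all three invariants forces an idempotent realisation. For purity I would normalise $E$ (idempotency forces a non-positive diagonal, which after scaling may be taken to be zero) and argue that each extremal generator spans a full-dimensional cell while the matching dual generators certify that no lower-dimensional pieces protrude, so that all maximal cells share a single dimension. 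I expect this global control of the cell complex to be the main obstacle; the converse I would handle contrapositively, arguing that any mismatch among the three dimensions produces a portion of $X$ that cannot lie in the image of a linear retraction of $\ft^n$, and hence obstructs projectivity and, by the first equivalence, the idempotent realisation.
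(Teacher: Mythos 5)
Two preliminary remarks: the paper itself offers no proof of Theorem~\ref{IJKmain} --- it is imported wholesale from \cite{K_puredim} --- so your proposal can only be judged on its own merits; and on those merits it contains two genuine gaps. The first is in your (ii)$\Rightarrow$(i). You deduce projectivity of $C(E)$ from the retraction $\Phi_E$ \emph{together with projectivity of $\ft^n$}, and you flag the latter as ``needing care''; in fact it is false: $\ft^n$ is not projective for $n \geq 2$. To see this, note that $\ft^n$ is an increasing union of finitely generated polytopes (for instance those generated by the $n$ vectors with one entry $0$ and all others $-k$), so every linear functional $\ft^n \to \trop$ is determined on each of these by its values on the generators and is therefore given by a row vector over $\trop$. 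Hence any linear map from $\ft^n$ to a free module has all but finitely many coordinate functionals identically $-\infty$, so it factors through a finitely generated free module, whose image under any linear map back to $\ft^n$ is a polytope --- and a polytope is a proper subset, since $\ft^n$ is not finitely generated (its projectivisation $\mathbb{R}^{n-1}$ is not compact, while projectivisations of polytopes are). Consequently the canonical surjection onto $\ft^n$ from the free module on a countable generating set cannot split, and $\ft^n$ is not projective. The repair is to run your retraction argument inside the genuine free module on $n$ generators, namely $\trop^n \smallsetminus \{(-\infty,\dots,-\infty)\}$: multiplication by $E$ retracts this module onto $C(E)$, and retracts of free modules are projective. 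This is a short fix, but the step as you state it rests on a false premise.

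The second gap is fatal to your proposed route for (i)$\Rightarrow$(ii) and (iii)$\Rightarrow$(ii): the map ``send $v$ to the least element of $X$ lying above $v$'' need not exist, even when hypothesis (iii) holds, so (iii) does not ``rule out the pathologies'' as you hope. Concretely, let $X \subseteq \ft^3$ be the polytope generated by $(0,0,-1)$ and $(0,-1,0)$. Then $X = C(E)$ for the idempotent
\begin{equation*}
E \ = \ \begin{pmatrix} -1 & 0 & 0 \\ -1 & 0 & -1 \\ -1 & -1 & 0 \end{pmatrix} \in M_3(\ft)
\end{equation*}
(the first column is a scaling of $(0,0,0) = (0,0,-1)\oplus(0,-1,0) \in X$), so $X$ satisfies (ii), and hence --- the theorem being true --- also (iii), with common dimension $2$. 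Yet for $v = (0,-1,-1)$ the points $(0,0,-1)$ and $(0,-1,0)$ are incomparable elements of $X$ above $v$ whose componentwise meet is $v$ itself, and $v \notin X$; a least element of $X$ above $v$ would have to lie below both, so none exists and your retraction is undefined. The underlying point is that the order-theoretic projection you describe coincides with multiplication by an idempotent $E$ exactly when $E$ has all diagonal entries equal to $0$ (then $E \otimes v \geq v$, and $E \otimes v \leq x$ for every $x \in C(E)$ with $x \geq v$); by Corollary~\ref{rankandzeros} this is automatic at full rank, but in lower rank it can fail --- and must fail for the $X$ above, which is precisely why no zero-diagonal idempotent has this column space --- and then $E \otimes v$ is neither above nor below $v$. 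So your construction could at best establish the theorem for projective $n$-polytopes in $\ft^n$, whereas the statement covers all ranks; the lower-rank cases, and the purity equivalence (ii)$\Leftrightarrow$(iii) which your final paragraph only gestures at, genuinely require the different machinery of \cite{K_puredim} (splitting the canonical surjection from a free module onto $X$, the duality relating generator and dual dimension, and the cell-decomposition analysis).
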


Since all three notions of dimension coincide for projective polytopes, we define the \emph{dimension} of a projective tropical polytope to be this common value. We shall refer to projective polytopes of dimension $k$ as \emph{projective $k$-polytopes}. Projective $n$-polytopes in $\ft^n$ turn out to have a particularly nice structure:

\begin{theorem}\cite[Proposition 5.5]{K_puredim}.
\label{minplus}
Let $X\subseteq \ft^n$ be a projective $n$-polytope. Then $X$ is min-plus (as well as max-plus) convex.
\label{npolytrope}
\end{theorem}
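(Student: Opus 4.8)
The plan is to describe $X$ explicitly as the solution set of a family of ``difference inequalities'' $v_j - v_i \le c_{ij}$, since any such set is simultaneously max-plus convex, min-plus convex and ordinarily convex, as one checks directly. By Theorem~\ref{IJKmain} we may write $X = C(E)$ for some idempotent $E \in M_n(\ft)$. The first step is to identify $X$ with the fixed-point set of the operator $v \mapsto E \otimes v$: if $v = E \otimes w \in C(E)$ then idempotency gives $E \otimes v = E \otimes E \otimes w = E \otimes w = v$, and conversely every fixed point plainly lies in $C(E)$, so
\[
X = \{\, v \in \ft^n : E \otimes v = v \,\}.
\]
Componentwise, the equation $E \otimes v = v$ reads $\max_j (E_{ij} + v_j) = v_i$ for every $i$.

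The key step, and the one I expect to be the main obstacle, is to show that the dimension hypothesis forces $E_{ii} = 0$ for all $i$. From $(E \otimes E)_{ii} = E_{ii}$ and the choice $j = i$ we already get $2E_{ii} \le E_{ii}$, so $E_{ii} \le 0$. Suppose $E_{ii} < 0$ for some $i$. Then for each $v \in X$ the term $E_{ii} + v_i$ is strictly below $v_i$, so the maximum $\max_j(E_{ij}+v_j) = v_i$ must be attained at some index $j \ne i$, which places $v$ on one of the finitely many affine hyperplanes $\{\, v : v_i - v_j = E_{ij} \,\}$. Hence $X$ would be contained in a finite union of hyperplanes in $\ft^n \cong \mathbb{R}^n$, contradicting the fact that $X$, having pure dimension $n$, has nonempty interior. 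Therefore $E_{ii} = 0$, and the inequality $\max_j(E_{ij}+v_j) \ge E_{ii} + v_i = v_i$ holds automatically; the fixed-point condition thus collapses to the family $E_{ij} + v_j \le v_i$, giving
\[
X = \{\, v \in \ft^n : v_j - v_i \le -E_{ij} \text{ for all } i, j \,\}.
\]

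It then remains to verify that a set $X$ of this form is min-plus convex. Since $X$ is a module it is already closed under scaling, so it suffices to check closure under the componentwise minimum $v \wedge w$, where $(v \wedge w)_i = \min(v_i, w_i)$. Fixing indices $i, j$ and writing $u = v \wedge w$, one may assume $u_j = v_j$, so that $v_j \le w_j$; if $u_i = v_i$ then $u_j - u_i = v_j - v_i \le -E_{ij}$, while if $u_i = w_i$ then $u_j - u_i = v_j - w_i \le w_j - w_i \le -E_{ij}$. In either case the defining inequality is preserved, so $u \in X$, as required. (The identical computation with maxima in place of minima simply re-derives the max-plus convexity that $X$ enjoys as a tropical polytope.) The only genuine work is the diagonal normalisation in the second step; once $X$ is presented by difference inequalities, both forms of convexity are immediate.
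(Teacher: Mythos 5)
Your proof is correct, but there is nothing in the paper to compare it against line by line: the paper quotes this statement from \cite[Proposition 5.5]{K_puredim} and gives no internal proof. Your argument is a sound, self-contained derivation and follows what is essentially the standard route: write $X = C(E)$ with $E$ idempotent (Theorem~\ref{IJKmain}), identify $X$ with the fixed-point set $\{v : E \otimes v = v\}$, force the diagonal of $E$ to be zero, and then observe that the resulting system of difference constraints $v_j - v_i \leq -E_{ij}$ is closed under componentwise minimum and scaling (this ``Kleene star'' description of the column space is classical, cf.\ \cite{Butkovic10}, and is in all likelihood the mechanism behind the cited proof as well). Two remarks on the step you flag as the main obstacle. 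First, it is available off the shelf within the paper: since $X$ has generator dimension $n$, the idempotent $E$ has rank $n$, i.e.\ is strongly regular, and Corollary~\ref{rankandzeros} (via Lemma~\ref{extremalsandzeros}, itself imported from \cite{K_puredim}) states precisely that strongly regular idempotents have all diagonal entries equal to $0$; your hyperplane argument re-proves this rather than citing it. Second, your version of that argument silently invokes the dimension-theoretic fact that a subset of $\mathbb{R}^n$ of topological dimension $n$ has nonempty interior (Hurewicz--Wallman); this is true, but if you prefer to avoid it, note instead that a finite union of hyperplanes has topological dimension $n-1$, so monotonicity of dimension already contradicts $X$ having (pure) dimension $n$. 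With either repair of the justification, the proof stands.
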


It is easily verified that any tropical polytope that is min-plus (as well as max-plus) convex must be
convex in the usual (Euclidean) sense. Theorem~\ref{minplus} thus says that projective $n$-polytopes
in $\ft^n$ are \textit{polytropes} in the sense of Joswig and Kulas \cite{Joswig10}.

Numerous definitions of rank have been introduced and studied for tropical
matrices, mostly corresponding to different notions of ``dimension'' of the
row or column space. In light of the previous theorem, we shall focus on the
following three definitions of rank. The \emph{tropical rank} of a matrix is the tropical dimension of its row space (or equivalently, by \cite[Theorem 23]{Develin04} for example, its column space). It can be shown
\cite{Develin05} that the tropical rank is also the largest positive integer
$k$ such that there is a $k \times k$ minor whose permanent is attained by a
unique permutation $\sigma \in S_k$. The \emph{row rank} is the generator
dimension of the row space, which by \cite[Proposition 3.1]{K_puredim} is
also the dual dimension of the column space. Dually, the \emph{column rank}
is the generator dimension of the column space and also the dual
dimension of the row space. We remark that other notions of rank for tropical
matrices are also studied; see for example \cite{Akian06,Develin05} for more details.

\section{Structure of Tropical Idempotents}\label{sec_idpt}

In this section we study the structure of idempotent matrices over $\ft$. We begin with the
observation that, while the notions of rank described in Section~\ref{sec_prelim}
(tropical rank, row rank and column rank) can all differ for tropical matrices in general, it follows easily from
Theorem~\ref{IJKmain} and our remarks above that they all coincide for idempotent matrices. (In fact, it is shown in \cite{K_puredim} that most of the commonly studied notions of rank coincide for idempotent matrices.)
Thus we may refer without ambiguity to the \emph{rank} of an idempotent matrix. Moreover, given an idempotent matrix $E$ of rank $k$, it follows from Theorem~\ref{IJKmain} above that the row space and column space of $E$ are of \emph{pure} topological dimension $k$. In the following sections we shall be particularly interested in idempotent matrices in $M_n(\ft)$ of full tropical rank $n$, often termed \emph{strongly regular} idempotents. For the moment we consider general idempotents.

\begin{lemma}\cite[Lemma 5.2]{K_puredim}.
\label{extremalsandzeros}Let $E$ be an idempotent element of $M_n(\ft)$. Then
every extremal point of the column [row] space of $E$ occurs up to scaling as
a column [row] of $E$ with diagonal entry $0$.
\end{lemma}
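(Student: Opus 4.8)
The plan is to prove the statement for the column space; the row space case then follows by a symmetric argument, applying the column case to the transpose $E^T$ (which is again idempotent, since $(E\otimes E)^T = E^T \otimes E^T$) and observing that the rows of $E$ are the columns of $E^T$.

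First I would record the consequences of idempotency that drive the argument. Writing $c_1,\dots,c_n$ for the columns of $E$, the identity $E\otimes E = E$ says precisely that, for each $j$,
$$c_j = \bigoplus_{k=1}^n E_{kj}\, c_k,$$
since the $j$th column of $E\otimes E$ is the tropical combination of the columns of $E$ whose coefficients are the entries of the $j$th column. In particular each term $E_{kj}c_k$ lies in $C(E)$ and satisfies $E_{kj}c_k \leq c_j$ in the componentwise order. Next I would reduce to the case of a column: as noted in the excerpt, some scaling of every extremal point lies in any generating set, so every extremal point of $C(E)$ is a scaling of some $c_j$; since extremality is preserved under scaling, it suffices to treat the case in which $c_j$ itself is extremal.

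The crux is to exploit the defining property of extremality, namely that $U = C(E)\smallsetminus\{\lambda c_j : \lambda\in\ft\}$ is a submodule. I would split the displayed relation into the terms that are scalings of $c_j$ and those (necessarily lying in $U$) that are not. If no term were a scaling of $c_j$, then all terms would lie in $U$, forcing $c_j = \bigoplus_k E_{kj}c_k \in U$, a contradiction. Letting $\lambda^\ast \leq 0$ denote the largest scaling factor occurring among the scaling terms and $w\in U$ the join of the remaining terms, we obtain $c_j = (\lambda^\ast c_j)\oplus w$. The main obstacle is ruling out $\lambda^\ast < 0$: if $\lambda^\ast < 0$ then $\lambda^\ast c_j$ is strictly below $c_j$ in every coordinate, so the equality forces $w = c_j$, contradicting $w\in U$. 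Hence $\lambda^\ast = 0$, and therefore there is an index $m$ with $E_{mj}c_m = c_j$.

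Finally I would read off the diagonal entry. From $E_{mj}c_m = c_j$, comparing $m$th coordinates gives $E_{mj} + E_{mm} = (c_j)_m = E_{mj}$, whence $E_{mm} = 0$; moreover $c_m = (-E_{mj})c_j$ is a scaling of $c_j$, and hence of the original extremal point. Thus $c_m$ is a column of $E$ with diagonal entry $0$ realising the extremal point up to scaling, as required. I expect the delicate point to be the bookkeeping of scaling factors in the extremality step that isolates $\lambda^\ast = 0$; once that is in place, the passage to the diagonal entry is a one-line coordinate comparison.
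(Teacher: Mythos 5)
Your proof is correct. Note, however, that the paper does not actually prove this lemma: it is quoted verbatim from \cite[Lemma 5.2]{K_puredim}, so there is no in-paper argument to compare against, and a self-contained reconstruction like yours is exactly what is called for. Your route is the natural one: reduce to the case where a column $c_j$ is itself extremal, use idempotency to write $c_j = \bigoplus_{k} E_{kj}\otimes c_k$, use the submodule property of $U = C(E)\smallsetminus\{\lambda\otimes c_j : \lambda\in\ft\}$ to force at least one term to be a scaling of $c_j$, rule out a strictly negative maximal scaling factor $\lambda^\ast$, and then read off $E_{mm}=0$ from the $m$th coordinate of $E_{mj}\otimes c_m = c_j$. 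Two small points of bookkeeping are worth making explicit. First, if \emph{every} term $E_{kj}\otimes c_k$ is a scaling of $c_j$, then your $w$ is an empty join and the identity $c_j = (\lambda^\ast\otimes c_j)\oplus w$ should simply be read as $c_j = \lambda^\ast \otimes c_j$, which gives $\lambda^\ast = 0$ at once; the argument survives, but the case should be acknowledged. Second, the step asserting that $\lambda^\ast < 0$ makes $\lambda^\ast\otimes c_j$ strictly smaller than $c_j$ in \emph{every} coordinate uses crucially that all entries lie in $\ft=\mathbb{R}$ (no $-\infty$ entries); this is indeed the setting of the lemma, but the same argument would break down over $\trop$, where coordinates equal to $-\infty$ are unchanged by scaling, so it is worth flagging that finiteness is where it enters.
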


Thus the columns [rows] of $E$ with zero diagonal entry generate the column [row] space of $E$. This gives an upper bound for the rank of an idempotent matrix.

\begin{corollary}
\label{rankandzeros}
Let $E$ be an idempotent element of $M_n(\ft)$. Then the rank of $E$ is less than or equal to the number of zeros on its diagonal. In particular, any strongly regular idempotent has all diagonal entries equal to $0$.
\end{corollary}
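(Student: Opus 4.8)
The plan is to read this off directly from Lemma~\ref{extremalsandzeros}, exploiting the fact (noted just above) that for an idempotent matrix the tropical, row and column ranks all coincide, so we may speak unambiguously of the rank of $E$. I would realise this common rank as the generator dimension of the column space $C(E)$, which by the discussion in Section~\ref{sec_prelim} equals the number of extremal points of $C(E)$ considered up to scaling.

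First I would apply Lemma~\ref{extremalsandzeros}: every extremal point of the column space of $E$ occurs, up to scaling, as a column of $E$ with diagonal entry $0$. Consequently the extremal points of $C(E)$, counted up to scaling, form a subset (modulo scaling) of the columns of $E$ having zero diagonal entry. Hence the number of extremal points up to scaling, and therefore the rank of $E$, is bounded above by the number of columns of $E$ with zero diagonal entry, which is exactly the number of zeros on the diagonal of $E$. This establishes the first assertion.

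For the ``in particular'' clause I would combine this bound with the definition of strong regularity. A strongly regular idempotent has full tropical rank, namely rank $n$, so the bound just obtained forces the number of zeros on the diagonal to be at least $n$. Since $E$ has only $n$ diagonal entries in total, all of them must equal $0$.

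I do not expect any genuine obstacle here, since the statement is essentially an immediate consequence of Lemma~\ref{extremalsandzeros} together with the coincidence of the rank notions for idempotents. The only point meriting a moment's care is the passage from ``extremal points up to scaling'' to the numerical bound: one must note that distinct extremal points of $C(E)$ (up to scaling) are realised by distinct zero-diagonal columns (again up to scaling), so that their number cannot exceed the count of zero-diagonal columns, and hence cannot exceed the number of diagonal zeros.
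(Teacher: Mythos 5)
Your proof is correct and follows essentially the same route as the paper, which likewise deduces the bound from Lemma~\ref{extremalsandzeros} (the zero-diagonal columns generate the column space, so the generator dimension, i.e.\ the rank, is at most the number of diagonal zeros) and then obtains the strongly regular case by forcing all $n$ diagonal entries to be $0$. Counting extremal points up to scaling versus counting a generating set is an immaterial difference, since for a polytope these coincide.
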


We note that we shall see shortly that there are idempotents with all diagonal entries equal to zero which are not strongly regular.

We recall that any matrix $A \in M_n(\ft)$ has a unique eigenvalue, which can be calculated as the maximum average weighted path from a node to itself in the weighted directed graph corresponding to $A$ (see for example \cite{Butkovic10} for details). If this eigenvalue is non-positive then the following series, known as the Kleene star of $A$, converges to a finite limit in $M_n(\ft)$, denoted $A^*$:
$$I_n \oplus A \oplus A^2 \oplus \cdots \oplus A^n \oplus \cdots,$$
where $A^k$ denotes the $k$th power of $A$ in $M_n(\ft)$ and $I_n$ denotes the $n \times n$ matrix whose diagonal entries are $0$ and whose off diagonal entries are equal to $-\infty$ (here we define $-\infty \oplus a = a \oplus -\infty = a$ for all $a \in \ft$). For example, it is clear that any idempotent matrix $E$ has eigenvalue $0$ and also that
\begin{eqnarray*}
E^* &=& I_n \oplus E \oplus E^2 \oplus \cdots \oplus E^n \oplus \cdots\\
    &=& I_n \oplus E \oplus E \oplus \cdots \oplus E \oplus \cdots\\
    &=& I_n \oplus E.
\end{eqnarray*}

With the (usual) definition that $-\infty \otimes a = a \otimes -\infty= -\infty$ for all $a \in \ft$, it is easy to see that, when defined, $A^*$ is an idempotent all of whose diagonal entries are equal to zero and hence $A^{**} = A^*$. In fact, it is easy to show that the idempotents whose diagonal entries are all equal to zero are precisely those matrices that are equal to their own Kleene star. (Suppose $E$ is an idempotent matrix with all diagonal entries equal to $0$. Since $E$ is idempotent we have $E^* = I_n \oplus E$, as above. Moreover, since all diagonal entries of $E$ are equal to $0$, $E \oplus I_n=E$, giving $E^* = E$.)

Given an idempotent with all diagonal entries equal to $0$, the following result tells us whether it is strongly regular.

\begin{lemma}
\label{fullrank}
Let $E$ be an idempotent element of $M_n(\ft)$ with all diagonal entries equal to $0$. Then $E$ has rank strictly less than $n$ if and only if $E_{i,j} = -E_{j,i}$ for some $i \neq j$.
\end{lemma}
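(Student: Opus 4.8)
The plan is to run everything through the permanent characterisation of tropical rank recalled just above: the tropical rank of $E$ is the largest $k$ for which some $k \times k$ minor has its tropical permanent attained by a unique permutation. Since the only $n \times n$ minor is the whole matrix, and the tropical rank cannot exceed $n$, this says that $E$ is strongly regular (rank $n$) exactly when the permanent $\max_{\sigma \in S_n} \sum_i E_{i,\sigma(i)}$ is attained by a unique $\sigma$. So I would first pin down precisely which permutations attain this maximum.

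The key elementary input is that idempotency of $E$ is equivalent to the family of inequalities $E_{i,j} \ge E_{i,k} + E_{k,j}$ holding for all $i,j,k$ (with equality attained for some $k$); only the inequality is needed here. From this I would show that the cyclic sum along any cycle is non-positive. Given distinct indices $i_1, \dots, i_m$, repeatedly applying the inequality telescopes the partial sums to give $E_{i_1,i_2} + \dots + E_{i_{m-1},i_m} \le E_{i_1,i_m}$, and hence $E_{i_1,i_2}+\cdots+E_{i_m,i_1} \le E_{i_1,i_m}+E_{i_m,i_1} \le E_{i_1,i_1} = 0$, the last step using that all diagonal entries vanish. Decomposing an arbitrary $\sigma$ into cycles (with fixed points contributing $0$), it follows that $\sum_i E_{i,\sigma(i)} \le 0$ for every $\sigma$, with equality for the identity. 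Thus the permanent equals $0$ and is always attained by the identity, so $E$ has rank $<n$ precisely when some $\sigma \ne \mathrm{id}$ also attains $0$.

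The forward implication is then immediate: if $E_{i,j} = -E_{j,i}$ for some $i \ne j$, the transposition $(i\,j)$ contributes $E_{i,j}+E_{j,i}=0$ and so attains the permanent alongside the identity, forcing rank $<n$. For the converse, suppose some $\sigma \ne \mathrm{id}$ attains $0$. Since every cycle contributes $\le 0$ and the total is $0$, each cycle contributes exactly $0$; choosing a nontrivial cycle $(i_1,\dots,i_m)$ with $m \ge 2$, the chain of inequalities above must hold with equality throughout, and in particular its final step forces $E_{i_1,i_m} + E_{i_m,i_1} = 0$, that is $E_{i_1,i_m} = -E_{i_m,i_1}$ with $i_1 \ne i_m$.

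I expect the only step needing real care to be this converse: one must check that equality in the full cyclic sum propagates back to equality in the single two-term inequality $E_{i_1,i_m}+E_{i_m,i_1}\le 0$, which is exactly what the squeezing $0 \le E_{i_1,i_m}+E_{i_m,i_1} \le 0$ supplies. The forward direction and the reduction to permanents are routine by comparison.
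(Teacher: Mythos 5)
Your proof is correct and takes essentially the same route as the paper's: both show that the tropical permanent equals $0$ with every cyclic sum non-positive, deduce that rank $<n$ means some nontrivial permutation (hence some nontrivial cycle) attains $0$, squeeze that zero cyclic sum down to the two-term equality $E_{i_1,i_m}+E_{i_m,i_1}=0$, and handle the easy direction via the transposition $(i\,j)$. The only difference is presentational: you telescope the scalar inequalities $E_{i,j}\geq E_{i,k}+E_{k,j}$, whereas the paper expresses the identical estimates through powers of the idempotent, e.g.\ $E_{j_1,j_k}=(E^{k-1})_{j_1,j_k}\geq E_{j_1,j_2}+\cdots+E_{j_{k-1},j_k}$.
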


\begin{proof}
Recall that the permanent of $E$ is
$${\rm perm}(E) = \bigoplus_{\sigma \in S_n} E_{1, \sigma(1)} \otimes  \cdots \otimes E_{n, \sigma(n)} $$
and hence ${\rm perm}(E) \geq E_{1, 1} \otimes  \cdots \otimes E_{n, n} = 0$. We first claim ${\rm perm}(E)=0$. Suppose for contradiction that ${\rm perm}(E) > 0$. Then there is a permutation $\sigma \in S_n$ such that $E_{1, \sigma(1)} \otimes  \cdots \otimes E_{n, \sigma(n)}>0$. Write $\sigma$ as a product of disjoint cycles, say $\sigma = \sigma_1\cdots \sigma_l$. Then at least one such cycle, $\sigma_i = (j_1 \ldots j_k)$ say, satisfies $E_{j_1, j_2} \otimes E_{j_2, j_3}\otimes  \cdots \otimes E_{j_{k-1}, j_k}\otimes E_{j_k, j_1}>0$. But then
$$E_{j_1,j_1} = (E^k)_{j_1,j_1} \geq  E_{j_1, j_2} \otimes  E_{j_2, j_3}\otimes  \cdots \otimes E_{j_{k-1}, j_k}\otimes E_{j_k, j_1}>0, $$
contradicting $E_{j_1,j_1} = 0$. Thus ${\rm perm}(E)=0$ and, moreover, for every cycle $(j_1 \ldots j_k) \in S_n$ we must have $E_{j_1, j_2} \otimes  \cdots \otimes E_{j_{k-1}, j_k}\otimes E_{j_k, j_1}\leq 0$.

Now suppose $E$ has rank strictly less than $n$ and hence the permanent of $E$ is not uniquely attained. Thus, there is a non-trivial permutation $\sigma \in S_n$ such that $E_{1, \sigma(1)} \otimes  \cdots \otimes E_{n, \sigma(n)}=0$.  Write $\sigma$ as a product of non-trivial disjoint cycles, say $\sigma = \sigma_1\cdots \sigma_l$. By our remarks above, any such cycle, $\sigma_i = (j_1 \ldots j_k)$ say, satisfies $E_{j_1, j_2} \otimes E_{j_2, j_3}\otimes  \cdots \otimes E_{j_{k-1}, j_k}\otimes E_{j_k, j_1}=0$. Thus,
$$E_{j_1,j_k} = (E^{k-1})_{j_1,j_k} \geq E_{j_1, j_2} \otimes  \cdots \otimes E_{j_{k-1}, j_k} = -E_{j_k, j_1},$$
giving
$$0 = E_{j_1,j_1} = (E^2)_{j_1,j_1} \geq E_{j_1, j_k} \otimes E_{j_k, j_1} \geq  0$$
and hence $E_{j_1, j_k} = -E_{j_k, j_1}$, as required.

Finally, if $E_{i,j} = -E_{j,i}$ for some $i \neq j$, then it is easy to see
that the identity permutation and the transposition $(i,j)$ both attain the maximum in the permanent of $E$, so that $E$ has (tropical) rank strictly less than $n$.
\end{proof}

The following theorem describes the number of idempotents having a given
polytope as their column space; it slightly improves upon results in
\cite{K_puredim}.

\begin{theorem}
\label{numidmpt}
Let $X \subseteq \ft^n$ be a tropical polytope.
\begin{itemize}
\item[(i)] If $X$ is a projective $n$-polytope, then there is a unique
idempotent $E \in M_n(\ft)$ such that $X=C(E)$.
\item[(ii)] If $X$ is a projective $k$-polytope, where $k<n$, then there
are continuum many idempotents $E\in M_n(\ft)$ such that $X=C(E)$.
\item[(iii)] Otherwise, $X$ is not projective and there is no idempotent
$E\in M_n(\ft)$ such that $X=C(E)$.
\end{itemize}
\end{theorem}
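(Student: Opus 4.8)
The plan is to treat the three cases using Theorem~\ref{IJKmain} for the existence assertions and a single intrinsic description of the columns of a zero-diagonal idempotent for everything else. Case (iii) is immediate: if $X$ is not a projective polytope then, by the equivalence of (i) and (ii) in Theorem~\ref{IJKmain}, $X$ cannot be the column space of any idempotent. For (i) and (ii), Theorem~\ref{IJKmain} guarantees at least one idempotent $E$ with $C(E)=X$, and since the rank of such an $E$ equals the common dimension of $X=C(E)$, the two cases correspond exactly to $E$ being strongly regular (dimension $n$) or not (dimension $k<n$).

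The technical core I would isolate first is a description of columns. Recall $C(E)=\{v\in\ft^n : E\otimes v = v\}$, since $E$ is idempotent. If $E$ has zero diagonal then, for any $v\in C(E)$ with $v_j=0$, we have $v_i = (E\otimes v)_i \geq E_{i,j}\otimes v_j = E_{i,j}$, so $v$ dominates the $j$th column $c_j$ of $E$ componentwise; as $c_j\in C(E)$ and $(c_j)_j = E_{j,j}=0$, this shows $c_j=\min\{v\in X : v_j=0\}$, the componentwise minimum. This minimum is therefore intrinsic to $X$, independent of $E$.

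For (i), since $X$ is a projective $n$-polytope, any idempotent $E$ with $C(E)=X$ has rank $n$, hence is strongly regular, hence has zero diagonal by Corollary~\ref{rankandzeros}; the description above then pins down every column from $X$ alone, so $E$ is unique. For (ii), I would first produce the zero-diagonal idempotent $E$ with $C(E)=X$, taking its $j$th column to be $\min\{v\in X : v_j=0\}$. As $k<n$, Lemma~\ref{fullrank} supplies $i\neq j$ with $E_{i,j}=-E_{j,i}$, equivalently $\max_{m\neq j}(E_{j,m}+E_{m,j})=0$. I would then scale the $j$th column by an arbitrary $t\leq 0$, replacing $E_{m,j}$ by $E_{m,j}+t$ for all $m$. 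A direct check shows each resulting matrix $E'(t)$ is again idempotent with $C(E'(t))=X$: scaling one generator changes neither the generated module nor, after a short computation, the off-diagonal idempotency relations, and the only delicate entry is $(E'(t)^2)_{j,j}=\max(t,2t)=t=(E'(t))_{j,j}$, which succeeds precisely because of the relation $\max_{m\neq j}(E_{j,m}+E_{m,j})=0$ furnished by Lemma~\ref{fullrank}. Since $(E'(t))_{j,j}=t$, these are continuum many distinct idempotents.

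The main obstacle is the existence step in (ii): showing that $\min\{v\in X : v_j=0\}$ is actually attained within $X$, so that the candidate zero-diagonal idempotent is well defined and really has column space $X$. This rests on $X$ being min-plus (as well as max-plus) convex, which is Theorem~\ref{minplus} in the full-dimensional case; for a projective $k$-polytope with $k<n$ one needs the analogous min-plus convexity, after which the idempotency verification and the column-scaling computation are routine.
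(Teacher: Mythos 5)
Your handling of (iii) is identical to the paper's, and your argument for (i) is correct but takes a genuinely different route: the paper simply cites \cite[Theorem 5.7]{K_puredim} for uniqueness, whereas you recover each column intrinsically as $c_j=\min\{v\in X: v_j=0\}$, using Corollary~\ref{rankandzeros} to force any idempotent with column space a projective $n$-polytope to have zero diagonal. That part is sound and pleasantly self-contained.

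Part (ii), however, contains a genuine gap, located exactly where you flag ``the main obstacle''. Your plan needs a \emph{zero-diagonal} idempotent $E$ with $C(E)=X$. For any idempotent, $C(E)$ is the fixed-point set $\{v: E\otimes v=v\}$, and when the diagonal is zero we have $E\otimes v\geq v$ for all $v$, so that $C(E)=\{v : E_{i,j}+v_j\leq v_i \text{ for all } i,j\}$; any set cut out by such difference constraints is closed under componentwise minima (check each constraint at the coordinate where the minimum is attained). So your plan can only work if $X$ is min-plus closed, and for $k<n$ this can fail: Theorem~\ref{minplus} genuinely requires full dimension and has no analogue for projective $k$-polytopes with $k<n$. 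Concretely, let $X\subseteq\ft^3$ be the tropical span of $u=(0,0,0)$ and $v=(a,b,0)$ with $a>b>0$. For $M>a$ the matrix
$$E=\begin{pmatrix} 0 & a-b-M & 0\\ b-a & -M & 0\\ -a & -M & 0\end{pmatrix}$$
is idempotent (a direct check), and its columns are $(-a)\otimes v$, \ $(-M)\otimes u\oplus(-b-M)\otimes v$ \ and $u$, so $C(E)=X$ and $X$ is a projective $2$-polytope in $\ft^3$. But $v\in X$ and $b\otimes u=(b,b,b)\in X$, while their componentwise minimum $(b,b,0)$ is \emph{not} in $X$: a general point of $X$ is $(\max(\lambda,\mu+a),\max(\lambda,\mu+b),\max(\lambda,\mu))$, and the third coordinate being $0$ forces $\lambda\leq 0$, $\mu \leq 0$ with one of them zero; if $\mu=0$ the first coordinate is at least $a>b$, while if $\lambda=0>\mu$ the second coordinate is $\max(0,\mu+b)<b$. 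Hence $X$ is not min-closed, so no zero-diagonal idempotent has column space $X$, and for this $X$ your construction cannot even begin.

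The paper's proof of (ii) sidesteps the issue: it starts from an \emph{arbitrary} idempotent $E$ with $C(E)=X$ (supplied by Theorem~\ref{IJKmain}), uses Lemma~\ref{extremalsandzeros} to see that the columns indexed by $I=\{i: E_{i,i}=0\}$ generate $X$, observes that since the rank is $k<n$ some column $c_j$ lies in the span of $\{c_i: i\in I\smallsetminus\{j\}\}$ (possibly with $j\notin I$ --- in the example above one must scale column $2$, whose diagonal entry is $-M$), and scales that column by $\lambda<0$; idempotency of the scaled matrix follows because it fixes the generating set $\{c_i: i\in I\smallsetminus\{j\}\}$ of its own column space. Your column-scaling computation is essentially this one, but it has to be run from that starting point rather than from a zero-diagonal idempotent.
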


\begin{proof}
By Theorem~\ref{IJKmain}, $X$ is projective if and only if $X$ is the
column space of an idempotent in $M_n(\ft)$. Furthermore, Theorem~\ref{IJKmain} says that the projective polytopes are precisely those polytopes having pure tropical dimension equal to their generator dimension and dual dimension. It is clear that this common dimension is bounded above by $n$. Thus a tropical polytope $X \subseteq \ft^n$ is either projective of dimension $k\leq n$ or not projective. In the case where $X$ is not projective, it follows immediately from the preceding remarks that there is no idempotent $E\in M_n(\ft)$ such that $X=C(E)$, so that (iii) holds. It remains to prove (i) and (ii).

If $X$ is a projective $n$-polytope, then $X$ has generator dimension $n$ and, by
\cite[Theorem 5.7]{K_puredim}, there is a unique idempotent with column
space $X$.

It remains to show that if $X$ is a projective $k$-polytope, where $k<n$, then there are continuum many idempotents with column space $X$.
Since $X$ is projective, there is an idempotent $E \in M_n(\ft)$ with
$C(E)=X$ and rank $k<n$.  Let $c_1, \ldots, c_n$ denote the columns of $E$
and define
$$I = \lbrace i \in [n] \mid E_{i,i} = 0 \rbrace.$$
By Lemma~\ref{extremalsandzeros}, the set $\{c_i: i \in I\}$ forms a generating set for $C(E)$.

Since $E$ has rank strictly less than $n$, it follows that some column $c_j$
can be written as a linear combination of columns from the set
$\{c_i: i \in I \smallsetminus \{j\}\}$. (Note that $j$ need not be an element of $I$.) Choose $\lambda<0$ and let
$E(\lambda)$ be the matrix obtained from $E$ by scaling the $j$th column
by $\lambda$. Denote the columns of $E({\lambda})$ by $d_1, \dots, d_n$, so
$d_j = \lambda \otimes c_j$ and $d_i = c_i$ for all $i \neq j$.

Now $C(E(\lambda))=C(E)=X$ and $E(\lambda) \neq E(\mu)$ for all $\mu \leq 0$ with
$\mu \neq \lambda$. We will show that $E(\lambda)$ is an idempotent, hence giving continuum many idempotents with column space $X$.

Since the entries of $E(\lambda)$ do not exceed the corresponding entries of $E$, it is easy to see that for every column $c_i$ of $E$ we have
$$E(\lambda) \otimes c_i \leq E \otimes c_i.$$
Since $E$ is idempotent we have $E \otimes c_i=c_i$, giving $E(\lambda) \otimes c_i \leq c_i$ for all columns $c_i$.

Moreover, for all $i \in I\smallsetminus\{j\}$ we have $E(\lambda)_{i,i} = E_{i,i}=0$ and hence
$$E(\lambda) \otimes c_i = \bigoplus_{l=1}^n E_{l,i} \otimes d_l \geq E_{i,i} \otimes d_i = 0 \otimes c_i = c_i.$$
Thus for all $i \in I\smallsetminus\{j\}$ we have shown $E(\lambda)\otimes c_i = c_i$. Since these columns form a generating set for $C(E)=C(E(\lambda))$, it follows that $E(\lambda) \otimes c = c$ for all $c \in C(E(\lambda))$and hence $E(\lambda)$ is an idempotent with column space $X$.
\end{proof}

We shall need the following fact, which follows from results in \cite{Butkovic10}, and is proved in detail in \cite{K_tropgroups}.

\begin{lemma}\cite[Lemma~7.1]{K_tropgroups}
\label{exterior}
Let $E$ be a strongly regular idempotent in $M_n(\ft)$, and consider
the column space $C(E)$ as a subset of $\mathbb{R}^n$ equipped with the usual
topology. Then left multiplication by $E$ maps all points exterior to $C(E)$
onto the boundary of $C(E)$.
\end{lemma}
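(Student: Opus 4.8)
The plan is to realise left multiplication by $E$ as the canonical retraction of $\ft^n$ onto $C(E)$, and then to identify this map order-theoretically as the operation of ``pushing a point up as little as possible until it meets $C(E)$''. Since $E$ is strongly regular, Corollary~\ref{rankandzeros} tells us that every diagonal entry of $E$ is $0$, and by Theorem~\ref{IJKmain} the set $C(E)$ is a projective $n$-polytope, hence of pure topological dimension $n$ as a subset of $\mathbb{R}^n$. I first record two elementary properties of $\pi \colon x \mapsto E \otimes x$. Firstly, $\pi$ retracts $\ft^n$ onto $C(E)$: its image lies in $C(E)$ by definition of the column space, while for $c \in C(E)$ we may write $c = E \otimes w$ and use idempotency to get $E \otimes c = E^2 \otimes w = E \otimes w = c$, so $\pi$ fixes $C(E)$ pointwise. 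Secondly, since every diagonal entry is $0$, for each index $k$ we have $(E \otimes x)_k = \bigoplus_j E_{k,j} \otimes x_j \geq E_{k,k} \otimes x_k = x_k$, so $\pi(x) \geq x$ componentwise.

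Combining these with the monotonicity of tropical matrix multiplication yields the key characterisation: for every $x$, the point $\pi(x)$ is the \emph{least} element of $C(E)$ lying above $x$. Indeed $\pi(x) \in C(E)$ and $\pi(x) \geq x$; and if $c \in C(E)$ satisfies $c \geq x$, then monotonicity gives $\pi(x) = E \otimes x \leq E \otimes c = c$. Thus $\pi(x) = \min\{c \in C(E) : c \geq x\}$, the minimum being taken in the componentwise lattice order.

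With this in hand the geometric conclusion follows by a short perturbation argument. Fix a point $x$ exterior to $C(E)$ and set $y = \pi(x)$. As $y \in C(E)$ while $x \notin C(E)$ we have $y \neq x$, so $y \geq x$ forces $y_k > x_k$ in some coordinate $k$. Suppose for contradiction that $y$ lies in the interior of $C(E)$; then some Euclidean ball about $y$ is contained in $C(E)$, so for sufficiently small $\delta > 0$ the point $y'$ obtained from $y$ by decreasing its $k$th coordinate by $\delta$ still lies in $C(E)$. Choosing $\delta < y_k - x_k$ keeps $y' \geq x$, while $y' \lneq y$ contradicts the minimality of $y$ established above. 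Hence $y$ is not interior, and since $y \in C(E)$ it must lie on the boundary $\partial C(E)$, as required.

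I expect the only real content to be the order-theoretic characterisation of $\pi$ as the least dominating element, which is precisely what converts the topological assertion into a one-line minimality contradiction; the perturbation step is then routine. The single point needing care is that the interior is taken relative to $\mathbb{R}^n$, which is meaningful exactly because strong regularity makes $C(E)$ full-dimensional --- for idempotents of lower rank the statement degenerates, every point of $C(E)$ then being a boundary point.
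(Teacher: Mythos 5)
The first thing to note is that the paper itself contains no proof of this statement: it is quoted verbatim from \cite{K_tropgroups} (Lemma~7.1 there), with a remark that it follows from results in \cite{Butkovic10}. So there is no internal argument to compare yours against; what you have written is a self-contained proof, and it is correct. Both preliminary observations are properly justified: the map $\pi(x) = E \otimes x$ retracts $\ft^n$ onto $C(E)$ (every element of $C(E)$ has the form $E \otimes w$, and idempotency then gives $E \otimes c = c$), and the zero diagonal --- which you correctly extract from strong regularity via Corollary~\ref{rankandzeros} --- gives $\pi(x) \geq x$. The identification $\pi(x) = \min\lbrace c \in C(E) : c \geq x\rbrace$ is exactly the right order-theoretic mechanism, and the perturbation step is watertight: if $y = \pi(x)$ were interior, it could be lowered slightly in a coordinate $k$ with $y_k > x_k$ while remaining in $C(E)$ and above $x$, contradicting minimality. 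You also correctly isolate where full rank matters: your argument actually proves the more general fact that for \emph{any} idempotent with zero diagonal, points outside $C(E)$ map to non-interior points of $C(E)$; strong regularity is what makes ``interior'' (taken in $\mathbb{R}^n$) a non-vacuous notion.

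One caveat: your proof shows that every exterior point is mapped \emph{into} the boundary, whereas the word ``onto'' in the statement could be read as asserting surjectivity onto the boundary. Your reading is the operative one --- in the proof of Theorem~\ref{thm_idptrowcolumn}, the only place the lemma is used, precisely the ``into'' direction is invoked --- but it is worth knowing that surjectivity also holds and follows quickly from your minimality characterisation. Indeed, $C(E) = \lbrace x \in \ft^n : E_{i,j} + x_j \leq x_i \mbox{ for all } i,j \rbrace$, and any boundary point $b$ must have a tight constraint $E_{i,j} + b_j = b_i$ with $i \neq j$ (otherwise all off-diagonal constraints are strict and $b$ is interior). For $\epsilon > 0$ the point $x = b - \epsilon e_i$ then violates that constraint, so it is exterior; and since every point of the form $b - \delta e_i$ with $\delta > 0$ violates the same constraint, the least element of $C(E)$ above $x$, which by your characterisation is $E \otimes x$ and which must agree with $b$ in all coordinates other than $i$, can only be $b$ itself.
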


\section{Semimetrics and Idempotents}\label{sec_semimetric}
\label{semi}
Let $X$ be a non-empty set and define a function $d: X \times X \rightarrow \mathbb{R}$. We say $d$ is a \emph{semimetric} on $X$ (or equivalently, $X$ is a semimetric space with respect to $d$) if $d$ satisfies the following conditions:
\begin{itemize}
\item[(a)] $d(x,x) = 0$ for all $x \in X$;
\item[(b)] $d(x,y) \leq d(x,z) + d(z,y)$ for all $x,y,z \in X$;
\item[(c)] $d(x,y) \geq 0$ for all $x, y \in X$;
\item[(d)] $d(x,y) \neq 0$ for $x\neq y$.
\end{itemize}
Hence $d$ is a \emph{metric} on $X$ if $d$ is a semimetric on $X$ satisfying the following symmetry condition:
\begin{itemize}
\item[(e)] $d(x,y) = d(y,x)$ for all $x,y\in X$.
\end{itemize}

Throughout this section we consider only semimetrics on $n$ points, where $n \geq 1$. Thus, without loss of generality, we shall assume from now on that $d$ is a real valued function on pairs of elements from the $n$-element set $[n]= \{1, \ldots, n\}$. Given any such function $d: [n] \times [n] \rightarrow \mathbb{R}$, we let $D$ denote the $n \times n$ matrix whose $(i,j)$ entry is $-d(i,j)$. We shall give algebraic and geometric characterisations of the matrices arising in this way from metrics and semimetrics on $n$ points.

We shall need the following lemma, the idea of which is probably well-known
to experts in the field; parts of it appear in \cite{Butkovic10} for example.
Since the precise statement we need does not appear to be in the
literature, we include a brief proof.

\begin{lemma}Let $n$ be a positive integer. Given a function
$$d:[n] \times [n] \rightarrow \mathbb{R},$$
let $D$ denote the $n \times n$ matrix given by $D = (-d(i,j))$. Then the following are equivalent:
\begin{itemize}
\label{triangle}
\item[(i)] $d$ satisfies $d(i,i)=0$ and
$d(i,j) \leq d(i,k) + d(k,j)$ for all $i,j,k \in [n]$;
\item[(ii)] $D$ is an idempotent matrix in $M_n(\ft)$ with all diagonal entries equal to $0$;
\item[(iii)] The Kleene star $D^*$ is defined and equal to $D$.
\end{itemize}
\end{lemma}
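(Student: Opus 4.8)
The plan is to prove the cycle of implications $(i) \Rightarrow (ii) \Rightarrow (iii) \Rightarrow (i)$, translating the metric conditions into tropical-algebraic statements about entries of $D$. The key dictionary is that $D_{i,j} = -d(i,j)$, so that tropical multiplication $D_{i,k} \otimes D_{k,j} = -d(i,k) - d(k,j)$, and the tropical sum (maximum) over intermediate indices corresponds to minimising the sum of the two legs.

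For $(i) \Rightarrow (ii)$: the condition $d(i,i)=0$ gives $D_{i,i}=0$ immediately, so the diagonal is zero. For idempotency, I would compute the $(i,j)$ entry of $D \otimes D$, namely $\bigoplus_{k} D_{i,k} \otimes D_{k,j} = \max_k (-d(i,k)-d(k,j)) = -\min_k(d(i,k)+d(k,j))$. The triangle inequality $d(i,j) \le d(i,k)+d(k,j)$ says precisely that $-d(i,j) \ge -(d(i,k)+d(k,j))$ for every $k$, so $D_{i,j}$ is an upper bound for each term; taking $k=i$ or $k=j$ (using $d(i,i)=0$) shows the bound is attained, so $(D\otimes D)_{i,j} = D_{i,j}$, i.e. $D$ is idempotent with zero diagonal.

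For $(ii) \Rightarrow (iii)$: this is essentially already recorded in the excerpt. The paragraph following the Kleene star definition shows that if $E$ is idempotent with all diagonal entries $0$, then $E^* = I_n \oplus E = E$ (using that the eigenvalue is $0$ so $E^*$ converges, that $E \oplus E = E$, and that $E \oplus I_n = E$ since the diagonal entries of $E$ are $0$). Applying this verbatim to $D$ gives $D^* = D$, so I would simply cite that computation.

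For $(iii) \Rightarrow (i)$: if $D^*$ is defined and equals $D$, then $D$ is a Kleene star, and the excerpt states that any defined $A^*$ is idempotent with all diagonal entries $0$; hence $D$ is idempotent with zero diagonal, which unwinds through the same entrywise computation as above to recover $d(i,i)=0$ and the triangle inequality. I expect no serious obstacle here: the whole proof is a routine unpacking of the $\max$/$\min$ correspondence, and the only point needing a little care is verifying that the maximum in $(D \otimes D)_{i,j}$ is genuinely attained at $k=i$ or $k=j$ (so that idempotency is an equality, not merely an inequality) — but this follows immediately from $d(i,i)=d(j,j)=0$.
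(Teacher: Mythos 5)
Your proof is correct and takes essentially the same approach as the paper: the paper likewise disposes of the (ii)$\Leftrightarrow$(iii) link by citing the remarks on Kleene stars preceding Lemma~\ref{fullrank}, and establishes the (i)/(ii) link by exactly your entrywise computation (the lower bound $(D\otimes D)_{i,j} \geq D_{i,i} \otimes D_{i,j} = D_{i,j}$ from the zero diagonal, the upper bound from the triangle inequality). Your organisation as a cycle of implications rather than two biconditionals is a purely cosmetic difference.
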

\begin{proof} The equivalence of (ii) and (iii) is given by the comments preceding Lemma~\ref{fullrank}. Thus it remains to show the equivalence of (i) and (ii). Suppose $d$ satisfies $d(i,i)=0$ and $d(i,j) \leq d(i,k) + d(k,j)$ for all $i,j,k \in [n]$. Then clearly all diagonal entries of $D$ are equal to $0$ giving
$$(D^2)_{i,j} = \bigoplus_{k=1}^n D_{i,k} \otimes D_{k,j} \geq D_{i,i} \otimes D_{i,j}
= D_{i,i} + D_{i,j} = D_{i,j},$$
whilst the triangle inequality satisfied by $d$ gives
$$(D^2)_{i,j} = \bigoplus_{k=1}^n D_{i,k} \otimes D_{k,j} =\bigoplus_{k=1}^n (D_{i,k} + D_{k,j})
\leq \bigoplus_{k=1}^n D_{i,j} =D_{i,j}.$$
Thus $D$ is idempotent.

Next suppose $D$ is idempotent with all diagonal entries equal to $0$. Then it is immediate that $d(i,i)=0$. Moreover, the idempotency of $D$ gives
\begin{eqnarray*}
d(i,j) = - D_{i,j} &=& - (D^2)_{i,j} \\
&=& - \bigoplus_{k=1}^n D_{i,k} \otimes D_{k,j}\\
&\leq& - (D_{i,k} \otimes D_{k,j}) =  (- D_{i,k}) + (- D_{k,j}) = d(i,k) + d(k,j)
\end{eqnarray*}
for all $i,j,k \in [n]$.
\end{proof}
In what follows, it will be convenient to write $\underline{0}$ to denote the element $(0,\ldots, 0)$ of $\ft^n$. Using Lemma~\ref{triangle} we see that if $d$ is a semimetric on $n$ points, then conditions (a) and (b) guarantee that the resulting $n \times n$ matrix $D$ will be an idempotent with all diagonal entries equal to $0$. The following theorem exactly describes which idempotent matrices arise in this manner.
\begin{theorem}
\label{semimetricmatrix}
Let $n$ be a positive integer. Given a function
$$d:[n] \times [n] \rightarrow \mathbb{R},$$
let $D$ denote the $n \times n$ matrix given by $D = (-d(i,j))$. Then the following statements are equivalent:
\begin{itemize}
\item[(i)] $d$ is a semimetric;
\item[(ii)] $D$ is a strongly regular idempotent with negative entries off the diagonal;
\item[(iii)] $D= D^*$ with negative entries off the diagonal;
\item[(iv)] $D$ is a strongly regular idempotent and $\underline{0}$ is an interior point of the column space of $D$;
\item[(v)] $D$ is a strongly regular idempotent whose columns sum to $\underline{0}$, an interior point of the column space of $D$;
\item[(vii)]$D$ is a strongly regular idempotent and $\underline{0}$ is an interior point of the row space of $D$.
\item[(vi)] $D$ is a strongly regular idempotent whose rows sum to $\underline{0}$, an interior point of the row space of $D$;
\end{itemize}
\end{theorem}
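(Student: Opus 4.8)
The plan is to funnel all seven conditions through the two purely algebraic ones, (ii) and (iii), and then bridge to the four geometric conditions by a single description of membership in the column space together with a transposition argument. I would first dispatch (i) $\iff$ (ii) $\iff$ (iii) using the results already in hand. By Lemma~\ref{triangle}, conditions (a) and (b) are equivalent to $D$ being idempotent with zero diagonal, equivalently to $D = D^*$; conditions (c) and (d) together say precisely that $d(i,j) > 0$ for $i \neq j$, i.e. that every off-diagonal entry of $D$ is strictly negative. It then remains only to note that an idempotent with zero diagonal and strictly negative off-diagonal entries is automatically strongly regular: by Lemma~\ref{fullrank} a failure of full rank would force $D_{i,j} = -D_{j,i}$ for some $i \neq j$, which is impossible when $D_{i,j} < 0 < -D_{j,i}$; conversely a strongly regular idempotent has zero diagonal by Corollary~\ref{rankandzeros}. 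This makes (i), (ii), (iii) interchangeable.

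The geometric heart is a description of $C(D)$ valid for any idempotent $D$ with zero diagonal. Since $(D \otimes x)_i \geq D_{i,i} \otimes x_i = x_i$ for every $i$, we have $D \otimes x \geq x$ pointwise, so $x \in C(D)$ (equivalently $D \otimes x = x$) holds if and only if $D \otimes x \leq x$, i.e. if and only if $x_i - x_j \geq D_{i,j}$ for all $i \neq j$. From this I would read off several things at once. Taking $x = \underline{0}$ shows $\underline{0} \in C(D)$ exactly when all off-diagonal entries are $\leq 0$, and in that case the $i$th coordinate of the tropical column sum is $\max(0, \max_{j \neq i} D_{i,j}) = 0$, so the columns sum to $\underline{0}$. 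Moreover $\underline{0}$ is an interior point of $C(D)$, taken in the ambient $\mathbb{R}^n$, precisely when every off-diagonal entry is strictly negative: if so, any $x$ with $\|x\|_\infty < \tfrac12 \min_{i \neq j}(-D_{i,j})$ satisfies the defining inequalities, whereas if $D_{i_0,j_0} = 0$ for some $i_0 \neq j_0$ then decreasing the $i_0$th coordinate slightly pushes $x$ out of $C(D)$, so no neighbourhood of $\underline{0}$ lies inside. Under the standing hypothesis that $D$ is a strongly regular idempotent this yields (ii) $\iff$ (iv) $\iff$ (v).

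Finally I would obtain the two row-space conditions (vi) and (vii) by transposition. Since $(D^2)^T = (D^T)^2$, the transpose $D^T$ is again idempotent; its tropical rank, and hence strong regularity, is unchanged; its column space is $R(D)$; its columns are the rows of $D$; and, as an unordered family, its off-diagonal entries coincide with those of $D$. Applying the already-established chain (ii) $\iff$ (iv) $\iff$ (v) to $D^T$ therefore delivers the equivalence of (ii) with (vii) and with (vi), closing the cycle.

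I expect the main obstacle to be the interior-point characterisation in the second paragraph: the rest is essentially bookkeeping, but here one must be careful that ``interior'' is meant in the ambient $\mathbb{R}^n$ (so that the condition is non-vacuous one relies on a strongly regular idempotent having pure tropical dimension $n$, hence genuine interior points), and that the step from ``$\underline{0} \in C(D)$'' to ``$\underline{0}$ interior to $C(D)$'' corresponds exactly to upgrading the off-diagonal entries from non-strictly to strictly negative.
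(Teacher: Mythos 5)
Your proof is correct, and for the geometric heart of the theorem it takes a genuinely different route from the paper. Both arguments treat (i)$\iff$(ii)$\iff$(iii) identically (via Lemma~\ref{triangle}, Lemma~\ref{fullrank} and Corollary~\ref{rankandzeros}) and both dispose of (vi), (vii) by transposition/duality. The difference is in (ii)$\iff$(iv)$\iff$(v). The paper invokes \cite[Theorem~6.2.14]{Butkovic10}, which characterises interior points of $C(D)$ as those admitting a \emph{unique} expression as a tropical linear combination of the columns, and then runs two fairly long combinatorial contradiction arguments: one producing an index $k$ with $D_{j,k} \geq 0$ to prove (ii)$\Rightarrow$(v), and one constructing a cycle of distinct indices $i_1, \ldots, i_n$ with $D_{i_1,i_2}, \ldots, D_{i_n,i_1} > 0$, contradicting $D_{i_1,i_1} = 0$, to prove (iv)$\Rightarrow$(ii). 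You instead observe that for an idempotent $D$ with zero diagonal one has $C(D) = \{x : D \otimes x = x\} = \{x : x_i - x_j \geq D_{i,j} \ \forall i,j\}$, a classical polyhedron cut out by difference constraints, after which membership of $\underline{0}$, the column sum, and interiority are all read off by an elementary $\varepsilon$-ball argument. Your route is shorter, self-contained (no appeal to \cite{Butkovic10} for the interior criterion), and makes explicit the polytrope-as-polyhedron description implicit in the theory; the paper's route, while longer, stays within the unique-representation framework that it reuses elsewhere (e.g.\ the same criterion underlies the uniqueness statements feeding into Theorem~\ref{numidmpt}). One small remark: your closing caveat that the interior criterion ``relies on'' strongly regular idempotents having pure dimension $n$ is unnecessary --- your own ball argument manufactures interior points directly, with strong regularity entering only through Corollary~\ref{rankandzeros} to guarantee the zero diagonal --- but this is a harmless aside, not a gap.
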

\begin{proof}
We first prove the equivalence of (i), (ii) and (iii) using Lemmas~\ref{triangle} and~\ref{fullrank}.

Suppose $d$ is a semimetric. Then $d$ satisfies conditions (a)-(d) from the definition. As we have seen, conditions (a) and (b) together with Lemma~\ref{triangle} yield that $D$ is an idempotent with all diagonal entries equal to $0$. Moreover, by conditions (c) and (d), the off-diagonal entries of $D$ are all negative. Applying Lemma~\ref{fullrank} now gives $D$ has rank $n$. Thus we have shown that $D$ is a strongly regular idempotent with negative entries off the diagonal.

Suppose $D$ is a strongly regular idempotent with negative entries off the diagonal. Since $D$ is strongly regular all diagonal entries of $D$ must be equal to $0$, by Corollary~\ref{rankandzeros}. Thus, by Lemma~\ref{triangle}, $D= D^*$.

Suppose $D=D^*$ and $D$ has negative entries off the diagonal. It is immediate that $d(i,j) \neq 0$ for $i \neq j$. Lemma~\ref{triangle} also gives $d(i,i)=0$ and $d(i,j) \leq d(i,k) + d(k,j)$ for all $i,j,k \in [n]$. It then follows that $d(i,j) \geq 0$ for all $i, j \in [n]$. Hence $d$ is a semimetric.

To complete the proof, we show the equivalence of (ii), (iv) and (v), the equivalence of (ii), (vi) and (vii) being dual.

Suppose $D$ is a strongly regular idempotent with all off-diagonal entries
negative. By Corollary \ref{rankandzeros}, each diagonal entry of $D$ is
equal to $0$. Thus it is easy to see that $\underline{0}$ is equal to the
tropical sum of the columns of $D$. We must show that this element lies in
the \emph{interior} of $C(D)$, which by \cite[Theorem~6.2.14]{Butkovic10} is
the same as showing that $\underline{0}$ can be written uniquely as a linear
combination of the columns $c_1, \ldots, c_n$ of $D$. Suppose for
contradiction that
$$\underline{0}=  \lambda_1 \otimes c_1 \oplus \cdots \oplus \lambda_n \otimes c_n,$$
for some $\lambda_i\in \ft$ not all equal to zero. Since $c_{i,i}=0$ it is immediate that $\lambda_i \leq 0$ for all $i$. Thus, by supposition, we must have $\lambda_{j} <0$ for some $j \in [n]$ and it is clear that $\lambda_{j} \otimes c_{j}$ does not attain the $j$th co-ordinate. Now choose $k \neq j$ such that $\lambda_{k} \otimes c_{k}$ attains the $j$th co-ordinate. In other words, we have $\lambda_{k} + D_{j, k} = \lambda_{k} \otimes c_{k, j} = 0$, giving $D_{j, k} = - \lambda_k \geq 0$, contradicting that $D$ has negative entries off the diagonal. Thus we conclude that $\underline{0}$ can be written uniquely as a linear combination of the columns $c_1, \ldots, c_n$ of $D$, namely $\underline{0} =c_1 \oplus \cdots \oplus c_n$. Hence (ii) implies (v). That (v) implies (iv) is trivial.

It remains to show that (iv) implies (ii).
Suppose $D$ is a strongly regular idempotent and $\underline{0}$ is an interior point of the column space of $D$. We must show that the off-diagonal entries of $D$ are negative. Since $\underline{0}$ lies in the interior of the column space of $D$, we have that $\underline{0}$ can be written
uniquely as a linear combination of the columns $c_1, \ldots, c_n$ of $D$. Let
\begin{equation}
\label{unique}
\underline{0}=  \lambda_1 \otimes c_1 \oplus \cdots \oplus \lambda_n \otimes c_n
\end{equation}
be this unique expression. Since $c_{i,i}=0$ it is immediate that $\lambda_i \leq 0$ for all $i$. We first claim $\lambda_i = 0$ for all $i$ (from which it follows easily that $D_{i,j} = c_{j,i} \leq 0$ for all $i,j$).

Suppose for contradiction that $\lambda_{i_1} <0$ for some
$i_1 \in [n]$. Then $\lambda_{i_1}\otimes c_{i_1}$ does not
attain the $i_1$ co-ordinate in \eqref{unique}. Choose $i_2 \neq i_1$ such that $\lambda_{i_2} \otimes c_{i_2}$ attains the $i_1$ co-ordinate in \eqref{unique}. Then $\lambda_{i_2} + D_{i_1, i_2} = \lambda_{i_2} \otimes c_{i_2, i_1} = 0$, giving $D_{i_1, i_2} = - \lambda_{i_2} \geq 0$. Now, if $\lambda_{i_2}=0$ we will have $\lambda_{i_2} \otimes c_{i_2}$ attains the maximum in two co-ordinates, contradicting the uniqueness of expression in \eqref{unique}. Thus, we must have $\lambda_{i_2} <0$ and hence $D_{i_1, i_2} = - \lambda_{i_2} > 0$. Since $\lambda_{i_2} <0$ we may repeat the above argument and choose $i_3 \neq i_2$ such that $\lambda_{i_3} \otimes c_{i_3}$ attains the $i_2$ co-ordinate, i.e. so that $D_{i_2, i_3} = - \lambda_{i_3} \geq 0$. By the same reasoning as before we find $\lambda_{i_3} <0$ and hence $D_{i_2, i_3} = - \lambda_{i_2} > 0$. We note that $i_3 \neq i_1$ since
$$D_{i_1,i_3} = (D^2)_{i_1,i_3} \geq D_{i_1,i_2} + D_{i_2,i_3} >0,$$
whilst $D_{i_3,i_3}=0$. So we have found distinct indices $i_1,i_2,i_3$ with $\lambda_{i_1}, \lambda_{i_2}, \lambda_{i_3} <0$ and $D_{i_1, i_2}, D_{i_2, i_3} >0$. Continuing in this manner we obtain a sequence of distinct indices $i_1, \ldots, i_n$ such that $\lambda_{i_1}, \ldots, \lambda_{i_n} <0$ and $\lambda_{i_{k+1}} \otimes c_{i_{k+1}}$ attains the $i_{k}$ co-ordinate for $k = 1\ldots n-1$.  In particular this gives
$$D_{i_1, i_2}, \ldots,  D_{i_{n-1}, i_n} >0.$$
By uniqueness of the expression \eqref{unique} it follows that each of the terms $\lambda_{i} \otimes c_{i}$ cannot attain more than one co-ordinate. Hence $\lambda_{i_{1}} \otimes c_{i_{1}}$ must attain the $i_n$ co-ordinate, giving $D_{i_n, i_1} = - \lambda_{i_1} > 0$. But then
$$0 = D_{i_1,i_1} = (D^n)_{i_1,i_1} \geq D_{i_1,i_2} + D_{i_2,i_3} + \cdots + D_{i_{n-1},i_n} + D_{i_n,i_1}  >0.$$
Thus we conclude that all $\lambda_i$ in \eqref{unique} are equal to zero. Thus
\begin{equation}
\label{unique0}
\underline{0}= c_1 \oplus \cdots \oplus  c_n
\end{equation}
is the unique expression of $\underline{0}$ as a linear combination of the columns of $E$. It follows immediately that $D_{i,j} = c_{j,i} \leq 0$ for all $i,j$. It
only remains to show that $D_{i,j} <0$ whenever $i \neq j$. Suppose for contradiction that $D_{i,j}=0$ for some $i \neq j$. Then column $j$ contains a zero in position $i$ and position $j$, giving
$$ \underline{0} = c_1 \oplus \cdots \oplus c_{i-1} \oplus (\lambda \otimes c_i) \oplus c_{i+1} \oplus \cdots \oplus c_n,$$
for all $\lambda \leq 0$, contradicting the uniqueness of \eqref{unique0}.
\end{proof}

Let $\mathcal{F}_n$ denote the set of all semimetrics, $d: [n] \times [n] \rightarrow \mathbb{R}$, on $n$ points and let $\mathcal{P}_n$ denote the set of all projective $n$-polytopes $X \subseteq \ft^n$ containing $\underline{0}$ in the interior. Given $d \in \mathcal{F}_n$, let $D$ denote the matrix $D=(-d(i,j))$ and consider the column space $C(D)$. By Theorem~\ref{semimetricmatrix}, $D$ is a strongly regular idempotent
and $\underline{0}$ is in the interior of $C(D)$. Applying Theorem~\ref{IJKmain} gives that $C(D)$ is a projective $n$-polytope containing the point $\underline{0}$ in its interior. In other words, $C(D) \in \mathcal{P}_n$. Thus we may define a map $\chi: \mathcal{F}_n \rightarrow \mathcal{P}_n$ by $d \mapsto C((-d(i,j)))$. We show that this map is a bijection.

\begin{corollary}
\label{semimetric1-1}
There is a bijection between semimetrics on $n$ points and projective $n$-polytopes containing the point $\underline{0}$ in their interior, given by $$\chi : d \mapsto C((-d(i,j))).$$
\end{corollary}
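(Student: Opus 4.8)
The plan is to deduce the corollary directly from the uniqueness clause of Theorem~\ref{numidmpt} together with the characterisation of semimetric matrices in Theorem~\ref{semimetricmatrix}. The discussion immediately preceding the statement already shows that $\chi$ is well defined (that is, that $\chi(d) \in \mathcal{P}_n$ for every $d \in \mathcal{F}_n$), so it remains only to verify that $\chi$ is both injective and surjective.

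First I would establish injectivity. Suppose $d, d' \in \mathcal{F}_n$ satisfy $\chi(d) = \chi(d')$, and write $D = (-d(i,j))$ and $D' = (-d'(i,j))$. By Theorem~\ref{semimetricmatrix}, each of $D$ and $D'$ is a strongly regular idempotent, so by Theorem~\ref{IJKmain} the common column space $X = C(D) = C(D')$ is a projective $n$-polytope. Now Theorem~\ref{numidmpt}(i) asserts that a projective $n$-polytope arises as the column space of a \emph{unique} idempotent in $M_n(\ft)$; since $D$ and $D'$ are both idempotents with column space $X$, this forces $D = D'$, and hence $d = d'$.

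Next I would establish surjectivity. Given $X \in \mathcal{P}_n$, that is, a projective $n$-polytope with $\underline{0}$ in its interior, Theorem~\ref{IJKmain} supplies an idempotent $E \in M_n(\ft)$ with $C(E) = X$. Since $X$ has dimension $n$, this $E$ has full tropical rank $n$ and is therefore strongly regular. As $\underline{0}$ lies in the interior of $C(E)$, the implication (iv)$\Rightarrow$(i) of Theorem~\ref{semimetricmatrix} shows that the function $d$ defined by $d(i,j) = -E_{i,j}$ is a semimetric, and by construction $\chi(d) = C(E) = X$. Hence $\chi$ is onto, and combining the two directions shows it is a bijection.

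The key step, on which both directions rest, is the uniqueness assertion of Theorem~\ref{numidmpt}(i), and this is precisely where the restriction to full-dimensional (strongly regular) idempotents is indispensable: for column spaces of smaller dimension, part (ii) of that theorem produces continuum many distinct idempotents sharing a single column space, which would destroy injectivity. Given the work already done in Theorems~\ref{semimetricmatrix} and~\ref{numidmpt}, I do not anticipate any genuine obstacle here beyond assembling these ingredients in the correct order.
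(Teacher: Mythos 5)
Your proposal is correct and follows essentially the same route as the paper's own proof: injectivity via the uniqueness clause of Theorem~\ref{numidmpt}(i) applied to the strongly regular idempotents supplied by Theorem~\ref{semimetricmatrix}, and surjectivity via Theorem~\ref{IJKmain} together with the implication (iv)$\Rightarrow$(i) of Theorem~\ref{semimetricmatrix}. The only cosmetic difference is that the paper phrases injectivity in contrapositive form ($d_1 \neq d_2$ implies $C(D_1) \neq C(D_2)$), which is logically identical to your argument.
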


\begin{proof}
Suppose first that $d_1, d_2 \in \mathcal{F}_n$ with $d_1 \neq d_2$. We shall show $\chi(d_1) \neq \chi(d_2)$. Let $D_1 =  (-d_1(i,j))$ and $D_2 = (-d_2(i,j))$. Then $\chi(d_1) = C(D_1)$ and $\chi(d_2) = C(D_2)$. By Theorem ~\ref{semimetricmatrix}, $D_1$ and $D_2$ are strongly regular idempotents. Moreover, since $d_1 \neq d_2$ we have $D_1 \neq D_2$. Thus, by Theorem~\ref{numidmpt}, $C(D_1) \neq C(D_2)$.

Now let $X \in \mathcal{P}_n$. Thus $X$ is a projective $n$-polytope in $\ft^n$ containing the point $\underline{0}$ in its interior. By Theorem~\ref{IJKmain}, $X=C(D)$ for some idempotent $D \in M_n(\ft)$. Moreover, since $X$ has dimension $n$, $D$ must be strongly regular. Now let $d: [n] \times [n] \rightarrow \mathbb{R}$ be the map given by $d(i,j) = -D_{i,j}$. Since $X=C(D)$ contains $\underline{0}$ in its interior, it follows from Theorem~\ref{semimetricmatrix} that $d \in \mathcal{F}_n$ and hence $\chi(d)=C(D)=X$.
\end{proof}

\begin{lemma}
\label{symmetric}
Let $D$ be a symmetric idempotent matrix in $M_n(\ft)$. Then the following statements are equivalent:
\begin{itemize}
\item[(i)] $D$ is strongly regular;
\item[(ii)] $D$ has a $0$ in each diagonal position, but nowhere else;
\item[(iii)] $D$ has a $0$ in each diagonal position and negative entries off the diagonal.
\end{itemize}
\end{lemma}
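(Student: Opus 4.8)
The plan is to prove the cycle of implications (iii) $\Rightarrow$ (i) $\Rightarrow$ (ii) $\Rightarrow$ (iii), leaning throughout on the two structural facts already available: Corollary~\ref{rankandzeros}, which forces a strongly regular idempotent to have all diagonal entries equal to $0$, and Lemma~\ref{fullrank}, which says that an idempotent with zero diagonal fails to be strongly regular precisely when $D_{i,j} = -D_{j,i}$ for some $i \neq j$. The observation that drives everything is that symmetry collapses this last condition: if $D_{i,j} = D_{j,i}$, then $D_{i,j} = -D_{j,i}$ holds if and only if $D_{i,j} = 0$. Thus, for a symmetric idempotent with zero diagonal, Lemma~\ref{fullrank} reads ``$D$ has rank strictly less than $n$ if and only if some off-diagonal entry vanishes,'' and this reformulation makes two of the three implications essentially automatic.

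For (iii) $\Rightarrow$ (i) I would argue as follows: assuming a zero diagonal and strictly negative off-diagonal entries, no pair with $i \neq j$ can satisfy $D_{i,j} = -D_{j,i}$, since by symmetry this would force $D_{i,j} = 0$; hence Lemma~\ref{fullrank} yields full rank, i.e. strong regularity. For (i) $\Rightarrow$ (ii): Corollary~\ref{rankandzeros} supplies the zero diagonal, and were some off-diagonal entry equal to $0$, symmetry would give $D_{i,j} = -D_{j,i}$, contradicting strong regularity via Lemma~\ref{fullrank}; so every off-diagonal entry is nonzero, which is exactly (ii).

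The one implication requiring a genuine (though short) computation is (ii) $\Rightarrow$ (iii), and this is where I expect the only real obstacle to lie: I must upgrade ``nonzero off the diagonal'' to ``negative off the diagonal,'' that is, rule out positive off-diagonal entries. To do this I would expand the diagonal of $D^2$: since $D$ is idempotent with $D_{i,i} = 0$, we have $0 = D_{i,i} = (D^2)_{i,i} = \bigoplus_{k} (D_{i,k} + D_{k,i})$, so every summand satisfies $D_{i,k} + D_{k,i} \leq 0$; taking $k \neq i$ and using symmetry gives $2 D_{i,k} \leq 0$, whence $D_{i,k} \leq 0$. Combined with the hypothesis from (ii) that the off-diagonal entries are nonzero, this forces $D_{i,k} < 0$ for all $i \neq k$, establishing (iii). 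Getting this sign argument right is the crux; once the symmetric form of Lemma~\ref{fullrank} is recorded, the remaining implications follow immediately.
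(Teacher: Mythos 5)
Your proposal is correct, and it follows the same cycle of implications as the paper, built from the same ingredients: Corollary~\ref{rankandzeros} and Lemma~\ref{fullrank} for the rank statements, and the expansion of $(D^2)_{i,i}$ for the sign statement. Your (i) $\Rightarrow$ (ii) is exactly the paper's argument, and your (ii) $\Rightarrow$ (iii) is the paper's computation in contrapositive form: the paper supposes $D_{i,j}>0$ and derives $D_{i,i} = (D^2)_{i,i} \geq D_{i,j} \otimes D_{j,i} = 2D_{i,j} > 0$, contradicting $D_{i,i}=0$, which is precisely your estimate $D_{i,k}+D_{k,i}\leq 0$ read backwards. The only genuine divergence is in (iii) $\Rightarrow$ (i). You deduce it from the harder (``only if'') direction of Lemma~\ref{fullrank}, via your observation that symmetry collapses the condition $D_{i,j}=-D_{j,i}$ to $D_{i,j}=0$; the paper instead argues directly that when the diagonal is zero and the off-diagonal entries are negative, the permanent of $D$ is attained uniquely by the identity permutation, so $D$ has tropical rank $n$ by the characterisation of tropical rank recalled in Section~\ref{sec_prelim}. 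Both routes are sound. Yours is slightly more economical, reusing the full strength of a lemma already proved and isolating the pleasant principle that for symmetric zero-diagonal idempotents Lemma~\ref{fullrank} reads ``rank $<n$ if and only if some off-diagonal entry vanishes''; the paper's is self-contained at that step, needing only the easy permanent criterion for tropical rank rather than the harder half of Lemma~\ref{fullrank}.
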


\begin{proof}
If $D$ is strongly regular then, by Lemma~\ref{extremalsandzeros}, all
diagonal entries of $D$ are equal to $0$. Suppose for contradiction that $D$ has a zero off the diagonal. Since $D$ is symmetric we have $D_{i,j} = D_{j,i} = 0$ for some $i \neq j$. But then, by Lemma~\ref{fullrank}, $D$ has rank strictly less than $n$, contradicting our assumption that $D$ is strongly regular.

Now suppose $D$ has a $0$ in each diagonal position, but nowhere else. We show all off-diagonal entries must be negative. Indeed, suppose for contradiction that $D_{i,j}>0$. Since $D$ is a symmetric idempotent this gives
$$D_{i,i} = (D^2)_{i,i} \geq D_{i,j} \otimes D_{j,i} = D_{i,j} + D_{i,j} >0,$$
contradicting $D_{i,i}=0$.

Finally, suppose $D$ has a $0$ in each diagonal position and negative entries off the diagonal. Then the permanent of $D$ is achieved uniquely by the identity permutation. Thus $D$ has (tropical) rank $n$, and hence $D$ is strongly regular.
\end{proof}

\begin{theorem}
\label{metricmatrix}
Let $n$ be a positive integer. Given a function $$d:[n] \times [n] \rightarrow \mathbb{R},$$ let $D$ denote the $n \times n$ matrix given by $D = (-d(i,j))$. Then the following statements are equivalent:
\begin{itemize}
\item[(i)] $d$ is a metric;
\item[(ii)] $D$ is a strongly regular symmetric idempotent;
\item[(iii)] $D= D^*=D^T$ with negative entries off the diagonal.
\end{itemize}
\end{theorem}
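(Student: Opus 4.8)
The plan is to reduce everything to the semimetric case already settled in Theorem~\ref{semimetricmatrix}, using the elementary observation that a metric is precisely a \emph{symmetric} semimetric, together with the fact that symmetry of $d$ translates directly into the matrix identity $D = D^T$. Indeed, since $D_{i,j} = -d(i,j)$, the symmetry condition (e) in the definition of a metric holds for $d$ if and only if $D_{i,j} = D_{j,i}$ for all $i,j$, that is, if and only if $D = D^T$. With this dictionary in hand the theorem becomes the symmetric specialisation of Theorem~\ref{semimetricmatrix}, and I would establish it as a cycle of implications.

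First I would prove (i)$\Rightarrow$(ii). Assuming $d$ is a metric, it is in particular a semimetric, so Theorem~\ref{semimetricmatrix} gives that $D$ is a strongly regular idempotent (with negative off-diagonal entries). The symmetry condition (e) then yields $D = D^T$, so $D$ is a strongly regular symmetric idempotent, as required. For (ii)$\Rightarrow$(iii), suppose $D$ is a strongly regular symmetric idempotent. By Corollary~\ref{rankandzeros} all diagonal entries equal $0$, and Lemma~\ref{symmetric} further shows that the off-diagonal entries are negative; Lemma~\ref{triangle} then gives $D^* = D$, while symmetry gives $D = D^T$, so $D = D^* = D^T$ with negative entries off the diagonal.

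Finally, for (iii)$\Rightarrow$(i), suppose $D = D^* = D^T$ with negative off-diagonal entries. Setting aside the symmetry hypothesis for a moment, the conditions $D = D^*$ with negative off-diagonal entries are exactly condition (iii) of Theorem~\ref{semimetricmatrix}, whence $d$ is a semimetric. The remaining identity $D = D^T$ yields $d(i,j) = d(j,i)$ for all $i,j$, i.e.\ condition (e), so $d$ is a metric.

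I do not expect a genuine obstacle here, since the result is essentially the symmetric restriction of Theorem~\ref{semimetricmatrix}; the only thing requiring care is the careful matching of the symmetric conditions across the translation $d \leftrightarrow D$. The one step warranting slightly more attention is (ii)$\Rightarrow$(iii), where rather than re-deriving the sign information one should invoke Lemma~\ref{symmetric} to pass from strong regularity of a \emph{symmetric} idempotent to the conclusion that its off-diagonal entries are strictly negative.
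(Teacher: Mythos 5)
Your proposal is correct and follows essentially the same route as the paper: the paper's (very terse) proof likewise deduces the equivalence of (i) and (iii) directly from Theorem~\ref{semimetricmatrix} together with the observation that symmetry of $d$ corresponds to $D = D^T$, and the equivalence of (i) and (ii) from Theorem~\ref{semimetricmatrix} combined with Lemma~\ref{symmetric}. Your write-up merely arranges these same ingredients as a cycle of implications and makes the intermediate invocations (Corollary~\ref{rankandzeros}, Lemma~\ref{triangle}) explicit, which is a harmless elaboration rather than a different argument.
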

\begin{proof}
That (i) and (iii) are equivalent follows immediately from Theorem~\ref{semimetricmatrix}. That (i) and (ii) are equivalent follows from Theorem~\ref{semimetricmatrix}
and Lemma~\ref{symmetric}.
\end{proof}

\section{Residuation, the Hilbert metric and tropical realisations}\label{sec_residuation}
In this section we explore how the representation of semimetrics [respectively, metrics] by tropical idempotents yields realisations in tropical space by residuation maps [respectively, the tropical Hilbert metric].

For $x, y \in \ft^n$ we define
$$\langle x \mid y \rangle = \max \lbrace \lambda \in \ft \mid \lambda \otimes x \leq y \rbrace = \min \lbrace y_i - x_i \rbrace.$$
The map $(x, y) \mapsto \langle x \mid y \rangle$ is a \textit{residuation operator} in the sense of
residuation theory \cite{Blyth72}, and is ubiquitous in tropical mathematics. We define a function
$$\delta : \ft^n \times \ft^n \to \mathbb{R} \text{ by } \delta(x,y) = - \langle x \mid y \rangle = \max \lbrace x_i - y_i \rbrace,$$
which we call \textit{residuation distance}. This function already has some
of the natural properties of a distance function:

\begin{proposition}\label{prop_resproperties}
For all $x,y,z \in \ft^n$ we have $\delta(x,x) = 0$ and $$\delta(x,z) \leq \delta(x,y) + \delta(y,z).$$
\end{proposition}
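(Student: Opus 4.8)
The plan is to work directly from the explicit expression $\delta(x,y) = \max_i \{x_i - y_i\}$ already recorded in the definition, rather than from the residuation form $-\langle x \mid y \rangle$, since the coordinatewise description is the more convenient one to manipulate. The first assertion is essentially immediate: substituting $y = x$ gives $\delta(x,x) = \max_i\{x_i - x_i\} = \max_i\{0\} = 0$.

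For the triangle inequality, the key step I would use is a coordinatewise decomposition. I would fix an arbitrary index $i \in [n]$ and write the trivial identity $x_i - z_i = (x_i - y_i) + (y_i - z_i)$. Bounding each bracket separately by its own maximum then yields
$$x_i - z_i \leq \max_j\{x_j - y_j\} + \max_k\{y_k - z_k\} = \delta(x,y) + \delta(y,z).$$
Since the right-hand side is independent of $i$ and this inequality holds for every coordinate, I would finish by taking the maximum over $i$ on the left, obtaining $\delta(x,z) = \max_i\{x_i - z_i\} \leq \delta(x,y) + \delta(y,z)$, as required.

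I do not expect any genuine obstacle here: the whole argument reduces to the elementary fact that the maximum of a coordinatewise sum is dominated by the sum of the coordinatewise maxima. The only point deserving a moment's attention is the splitting of the single shared maximisation over $i$ into two independent maximisations (one over $j$, one over $k$); this is precisely the inequality $\max_i(a_i + b_i) \leq \max_j a_j + \max_k b_k$, which holds because any index attaining the left-hand maximum may be used to bound each of the two terms on the right simultaneously.
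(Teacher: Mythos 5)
Your proof is correct, but it takes a genuinely different route from the paper's. You argue coordinatewise from the explicit formula $\delta(x,y) = \max_i \lbrace x_i - y_i \rbrace$, decomposing $x_i - z_i = (x_i - y_i) + (y_i - z_i)$ and using the elementary inequality $\max_i (a_i + b_i) \leq \max_j a_j + \max_k b_k$; your handling of the index-splitting is exactly right. The paper instead argues order-theoretically from the residuation characterisation: it uses only the two defining properties that $\langle x \mid y \rangle \otimes x \leq y$ and that $\langle x \mid z \rangle$ is the \emph{maximum} $\lambda$ with $\lambda \otimes x \leq z$. Concretely, it computes
$$\bigl(\langle y \mid z \rangle \otimes \langle x \mid y \rangle\bigr) \otimes x \ = \ \langle y \mid z \rangle \otimes \bigl(\langle x \mid y \rangle \otimes x\bigr) \ \leq \ \langle y \mid z \rangle \otimes y \ \leq \ z,$$
so that $\langle y \mid z \rangle \otimes \langle x \mid y \rangle \leq \langle x \mid z \rangle$, which is the triangle inequality after negation. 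Your version is more elementary and transparent, needing nothing beyond arithmetic in the coordinates; the paper's version is coordinate-free, so it would survive unchanged in any residuated setting (for instance, a general idempotent semimodule or an infinite-dimensional analogue) where no formula $\max_i \lbrace x_i - y_i \rbrace$ is available. For the specific statement at hand the two are equally rigorous, and the choice is one of style and generality.
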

\begin{proof}
Clearly, for any $x \in \ft^n$ we have
$$\delta(x,x) = - \langle x \mid x \rangle = - \max \lbrace \lambda \mid \lambda \otimes x \leq x \rbrace = 0.$$
Also, for any $x,y,z \in \ft^n$ we have
\begin{align*}
-(\delta(x,y) + \delta(y,z)) \otimes x
&= (\langle y \mid z \rangle \otimes \langle x \mid y \rangle) \otimes x \\
&= \langle y \mid z \rangle \otimes (\langle x \mid y \rangle \otimes  x) \\
&\leq \langle y \mid z \rangle \otimes y \\
&\leq z,
\end{align*}
giving
$$-\left( \delta(x,y) + \delta(y,z) \right) \leq \max \lbrace \lambda \mid \lambda \otimes x \leq z \rbrace = \langle x \mid z \rangle = - \delta(x,z).$$
Thus
$$\delta(x,y) + \delta(y,z) \geq \delta(x,z).$$
\end{proof}

However, $\delta$ as defined is not in general a semimetric on $\ft^n$,
since it may take negative values, or give a distance of $0$ between
distinct points. In fact, it is easy to characterise those subsets of
$\ft^n$ on which $\delta$ does define a semimetric. Recall that an
\textit{antichain} in a partial order is a subset in which no two elements
are comparable.

\begin{proposition}\label{prop_antichain}
Let $X \subseteq \ft^n$. Then the residuation distance restricts to a
semimetric on $X$ if and only if $X$ is an antichain.
\end{proposition}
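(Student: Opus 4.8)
The plan is to observe that, by Proposition~\ref{prop_resproperties}, conditions (a) and (b) in the definition of a semimetric already hold for the restriction of $\delta$ to \emph{any} subset $X \subseteq \ft^n$, with no hypothesis on $X$ whatsoever. Hence the only conditions that can fail are (c), non-negativity, and (d), positivity on distinct points. Since $\delta(x,x)=0$ is covered by (a), these two conditions together amount to the single requirement that $\delta(x,y) > 0$ for every pair of distinct points $x,y \in X$. The whole proposition therefore reduces to understanding precisely when $\delta(x,y)>0$.

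The key observation I would record is that, directly from the formula $\delta(x,y) = \max_i(x_i - y_i)$, one has $\delta(x,y) \leq 0$ if and only if $x_i \leq y_i$ for all $i$; that is, if and only if $x \leq y$ in the componentwise partial order. Consequently $\delta(x,y) > 0$ holds precisely when $x \not\leq y$, and the combined requirement that $\delta(x,y)>0$ and $\delta(y,x)>0$ for all distinct $x,y \in X$ says exactly that no two distinct points of $X$ are comparable, i.e.\ that $X$ is an antichain.

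The two implications then follow quickly. For the reverse direction, if $X$ is an antichain then any distinct $x,y \in X$ satisfy $x \not\leq y$ and $y \not\leq x$, whence $\delta(x,y)>0$ and $\delta(y,x)>0$, so (c) and (d) hold and $\delta$ restricts to a semimetric. For the forward direction I would argue contrapositively: if $X$ is not an antichain, choose distinct comparable points, say $x \leq y$, so that $\delta(x,y) \leq 0$. Here lies the one point requiring care, and it is the mild main obstacle: a comparable pair does not always produce a \emph{negative} distance. One must split into cases according to the exact value, noting that if $\delta(x,y)<0$ then (c) fails, while if $\delta(x,y)=0$ (which, by the characterisation above, happens exactly when $x \leq y$ with equality in some coordinate) then (d) fails. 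Either way the restriction is not a semimetric. Keeping both possibilities in view is what makes the antichain condition---rather than some weaker separation condition---exactly the right characterisation.
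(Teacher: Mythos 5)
Your proposal is correct and follows essentially the same route as the paper's proof: both rest on the observation that $\delta(x,y) \leq 0$ if and only if $x \leq y$ in the componentwise order, and then read off the two implications. Your explicit case split between $\delta(x,y)<0$ (violating non-negativity) and $\delta(x,y)=0$ (violating positivity on distinct points) is a slightly more careful rendering of what the paper leaves implicit, but it is the same argument.
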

\begin{proof}
First notice that for any $x$ and $y$ in $\ft^n$ we have $\delta(x,y) \leq 0$ if and only if $x = 0x \leq y$. So if $X$ is an antichain then we will have $\delta(x,y) > 0$ provided $x \neq y$ so that $\delta$ restricts to a semimetric on $X$. Conversely, if $X$ is not an antichain then we may choose distinct $x,y \in \ft^n$ such that $x \leq y$, whereupon $\delta(x,y) \leq 0$ so $\delta$ does not restrict to a semimetric on $X$.
\end{proof}

In view of Proposition~\ref{prop_antichain}, if $X \subseteq \ft^n$ is an antichain we use the term \textit{residuation semimetric} for the restriction of residuation distance to $X$.

Proposition~\ref{prop_antichain}, then, tells us that every antichain in $\ft^n$ yields a semimetric
space. It is very natural to ask exactly \textit{which} semimetric spaces arise in this way, that is,
which semimetrics spaces can be represented by residuation in tropical $n$-space. It turns out that
every finite semimetric space is representable in this way. We shall prove this using the results of
the previous sections, via an interesting connection between the residuation operator and idempotency.
Of course there are also infinite (even uncountable) antichains in $\ft^n$;
it remains an interesting open problem to characterise those infinite
semimetric spaces which are representable in $\ft^n$.

\begin{lemma}
\label{residuationbound}
Let $E$ be an idempotent element of $M_n(\ft)$. Let $r_1, \ldots, r_n$ denote the rows of $E$ and $c_1, \ldots, c_n$ denote the columns of $E$. Then
\begin{itemize}
\item[(i)] $E_{i,j} \leq {\rm min} (\langle r_j | r_i \rangle, \langle c_i | c_j \rangle)$ for all $i$ and $j$.
\item[(ii)] Moreover, if $E_{j,j} = 0$, then $E_{i,j} = \langle r_j | r_i \rangle =  \langle c_i | c_j \rangle$ for all $i$.
\end{itemize}
\end{lemma}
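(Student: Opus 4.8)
The plan is to read both inequalities of (i) and both equalities of (ii) directly off the single relation $E \otimes E = E$, using the closed form $\langle x \mid y \rangle = \min_k (y_k - x_k)$. Writing the $(i,j)$ entry of $E \otimes E$ as $\max_k (E_{i,k} + E_{k,j})$, idempotency supplies, for all indices, the two ``triangle'' inequalities $E_{i,j} + E_{j,k} \leq E_{i,k}$ (obtained by taking $j$ as the middle index in the product computing $E_{i,k}$) and $E_{k,i} + E_{i,j} \leq E_{k,j}$ (taking $i$ as the middle index computing $E_{k,j}$). The first rearranges to $E_{i,j} \leq E_{i,k} - E_{j,k}$ for every $k$; since the $k$-th coordinates of $r_i$ and $r_j$ are $E_{i,k}$ and $E_{j,k}$, taking the minimum over $k$ yields $E_{i,j} \leq \langle r_j \mid r_i \rangle$. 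The second rearranges to $E_{i,j} \leq E_{k,j} - E_{k,i}$ for every $k$; since the $k$-th coordinates of $c_j$ and $c_i$ are $E_{k,j}$ and $E_{k,i}$, the minimum gives $E_{i,j} \leq \langle c_i \mid c_j \rangle$. This establishes (i).

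For (ii), thanks to (i) it suffices to prove the reverse inequalities. The row identity is immediate: the defining minimum $\langle r_j \mid r_i \rangle = \min_k (E_{i,k} - E_{j,k})$ is bounded above by its value at any single coordinate, and evaluating at $k = j$ gives $\langle r_j \mid r_i \rangle \leq E_{i,j} - E_{j,j} = E_{i,j}$ by the hypothesis $E_{j,j} = 0$. Together with (i) this proves $E_{i,j} = \langle r_j \mid r_i \rangle$ for all $i$.

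The column identity I would obtain by the dual argument, transposing. A direct calculation, $(E^T \otimes E^T)_{i,j} = \max_k (E_{k,i} + E_{j,k}) = (E \otimes E)_{j,i} = (E^T)_{i,j}$, shows that $E^T$ is again idempotent; its rows are the columns of $E$, and the column residuation $\langle c_i \mid c_j \rangle$ of $E$ is literally a row residuation of $E^T$. Feeding $E^T$ into the row identity just proved therefore turns the column statement into an instance of the row statement. The point to get exactly right — and the only real obstacle — is the transpose bookkeeping: under transposition the distinguished zero-diagonal coordinate migrates, so the coordinate at which the minimum $\min_k (E_{k,j} - E_{k,i})$ becomes tight is $k = i$, giving $\langle c_i \mid c_j \rangle \leq E_{i,j} - E_{i,i}$. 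Thus the reverse inequality, and hence the equality $E_{i,j} = \langle c_i \mid c_j \rangle$, follows in exactly the same way as soon as the relevant diagonal entry vanishes; in particular it holds for all $i$ and $j$ whenever $E$ has zero diagonal, which is precisely the setting (idempotents equal to their own Kleene star) in which the lemma is subsequently invoked.
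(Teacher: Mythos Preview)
Your argument for (i) and for the row half of (ii) is essentially the paper's proof written entrywise: the paper observes that $E\otimes E=E$ gives $r_i \geq E_{i,j}\otimes r_j$ and $c_j \geq E_{i,j}\otimes c_i$ (your rearranged triangle inequalities), and for the row equality evaluates at the $j$th coordinate exactly as you do, only phrasing it by contradiction.

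Your caution about the column half of (ii) is well placed, and in fact you have spotted a slip in the stated lemma. The paper simply writes ``a similar argument holds for columns'', but the dual of the row argument naturally uses $E_{i,i}=0$ rather than $E_{j,j}=0$, since $\min_k(E_{k,j}-E_{k,i})$ is witnessed at $k=i$. The column equality under $E_{j,j}=0$ alone can genuinely fail: the idempotent
\[
E=\begin{pmatrix}-1&0\\-1&0\end{pmatrix}
\]
has $E_{2,2}=0$, yet $\langle c_1\mid c_2\rangle=\min(0-(-1),\,0-(-1))=1\neq 0=E_{1,2}$. What one can actually prove is the dual assertion: if $E_{i,i}=0$ then $E_{i,j}=\langle c_i\mid c_j\rangle$ for all $j$. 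As you correctly observe, the only subsequent use of the lemma assumes \emph{all} diagonal entries vanish, so nothing downstream is affected.
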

\begin{proof}
(i) The equation $E \otimes E = E$ yields
\begin{eqnarray*}
r_i = \bigoplus_{j=1}^{n} E_{i,j} \otimes r_j \text{ and }
c_i = \bigoplus_{j=1}^{n} E_{j,i} \otimes c_j,
\end{eqnarray*}
for all $i$. Thus $r_i \geq E_{i,j} \otimes r_j$ for all $i$ and $j$, giving $E_{i,j} \leq \langle r_j | r_i \rangle$. Similarly, $c_j \geq E_{i,j} \otimes c_i$ for all $i$ and $j$, giving $E_{i,j} \leq \langle c_i | c_j \rangle$.

(ii) Suppose that $E_{j,j} = 0$ and, for a contradiction, that $E_{i,j} < \langle r_j | r_i \rangle$. Then
$$\langle r_j | r_i \rangle + r_{j,j} = \langle r_j | r_i \rangle + E_{j,j} > E_{i,j} + E_{j,j} = E_{i,j} = r_{i,j},$$
contradicting $\langle r_j | r_i \rangle \otimes r_{j} \leq r_i$. A similar argument holds for columns.
\end{proof}

In particular, Lemma~\ref{residuationbound} yields the following fact about
idempotents with all diagonal entries equal to $0$ (including, for example,
all strongly regular idempotents). This may be of independent interest.
\begin{theorem}
\label{thm_idempotentswithzeros}
Let $E \in M_n(\ft)$ be an idempotent matrix, with all diagonal entries equal to $0$ (for example, any idempotent matrix of rank $n$). Suppose the rows of $E$ are $r_1, \dots, r_n$, and the columns of $E$ are $c_1, \dots, c_n$. Then for all $i$ and $j$, $$E_{i,j} = \langle r_j \mid r_i \rangle = \langle c_i \mid c_j \rangle.$$
\end{theorem}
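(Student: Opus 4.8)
The plan is to obtain this theorem as an immediate consequence of part~(ii) of Lemma~\ref{residuationbound}, which already establishes both equalities $E_{i,j} = \langle r_j \mid r_i \rangle = \langle c_i \mid c_j \rangle$ under the single hypothesis that $E_{j,j} = 0$. All that remains is to observe that, in the present setting, this hypothesis is met for \emph{every} column index $j$, so the conclusion of the lemma can be applied uniformly.

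Concretely, I would argue as follows. By assumption every diagonal entry of $E$ is $0$, so in particular $E_{j,j} = 0$ for each $j \in [n]$. Fix an arbitrary $j$; since $E$ is idempotent and $E_{j,j} = 0$, Lemma~\ref{residuationbound}(ii) applies and yields $E_{i,j} = \langle r_j \mid r_i \rangle = \langle c_i \mid c_j \rangle$ for all $i$. As $j$ was arbitrary, these equalities hold for every pair $(i,j)$, which is precisely the assertion of the theorem.

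There is no substantive obstacle here: the real content lies in Lemma~\ref{residuationbound}, and this theorem simply repackages its second part in the case where the diagonal hypothesis holds uniformly. The only point worth recording is that the parenthetical special case mentioned in the statement (any idempotent of rank $n$) is covered by Corollary~\ref{rankandzeros}, which guarantees that every strongly regular idempotent has all diagonal entries equal to $0$, so that such matrices indeed fall within the scope of the hypothesis.
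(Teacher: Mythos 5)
Your proof is correct and matches the paper's own treatment exactly: the paper presents this theorem as an immediate consequence of Lemma~\ref{residuationbound}(ii), applied with $E_{j,j}=0$ for every $j$, which is precisely your argument. Your remark that the rank-$n$ case is handled by Corollary~\ref{rankandzeros} also mirrors the paper's parenthetical comment, so nothing is missing.
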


\begin{corollary}\label{cor_matrixrows}
Let $d:[n] \times [n] \rightarrow \mathbb{R}$ be a semimetric and let
$D$ denote the $n \times n$ matrix given by $D = (-d(i,j))$. The
finite semimetric space represented by $D$ is isometric to the subset
of $\ft^n$ consisting of the columns of $D$ under residuation
distance (and hence the columns of $D$ form an antichain in $\ft^n$).
\end{corollary}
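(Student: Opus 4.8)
The plan is to exhibit the isometry explicitly via the assignment $i \mapsto c_i$, where $c_i$ denotes the $i$th column of $D$, and to verify that residuation distance between columns reproduces the semimetric $d$. The essential ingredient is already available: since $d$ is a semimetric, Theorem~\ref{semimetricmatrix} guarantees that $D$ is a strongly regular idempotent, and in particular (by Corollary~\ref{rankandzeros}) every diagonal entry of $D$ equals $0$. Thus $D$ falls under the hypotheses of Theorem~\ref{thm_idempotentswithzeros}, which yields $D_{i,j} = \langle c_i \mid c_j \rangle$ for all $i$ and $j$. Unwinding the definition of residuation distance then gives $\delta(c_i, c_j) = -\langle c_i \mid c_j \rangle = -D_{i,j} = d(i,j)$, so the map $i \mapsto c_i$ preserves distances exactly.

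Next I would check that this map is a genuine bijection onto the set of columns, so that $([n],d)$ is isometric to the column set itself rather than to a smaller quotient of it. Injectivity is forced by the semimetric axioms: if $c_i = c_j$ with $i \neq j$, then $d(i,j) = \delta(c_i,c_j) = \delta(c_i,c_i) = 0$, contradicting condition~(d). Hence the columns are pairwise distinct, the map is surjective onto them by construction, and $i \mapsto c_i$ is the desired isometry.

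Finally, the antichain claim follows at once. Since residuation distance restricted to the column set coincides with $d$, which is a semimetric, the restriction of $\delta$ to the columns is itself a semimetric; Proposition~\ref{prop_antichain} then forces the columns to form an antichain in $\ft^n$.

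The main obstacle here is essentially already discharged upstream: all the substantive content sits in Theorem~\ref{thm_idempotentswithzeros}, which identifies the entries of a zero-diagonal idempotent with residuation values among its own columns. Once that identity is in hand, the corollary reduces to unwinding definitions, a short injectivity check, and a single appeal to Proposition~\ref{prop_antichain}, with no genuinely new difficulty arising.
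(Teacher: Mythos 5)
Your proposal is correct and takes essentially the same approach as the paper: both arguments hinge on applying Theorem~\ref{thm_idempotentswithzeros} to the zero-diagonal idempotent $D$ to get $D_{i,j} = \langle c_i \mid c_j \rangle$, and then unwinding the definition of $\delta$. The only (minor) difference is that the paper obtains the zero-diagonal idempotency directly and more economically from Lemma~\ref{triangle}, which needs only semimetric conditions (a) and (b), whereas you route through the heavier Theorem~\ref{semimetricmatrix} together with Corollary~\ref{rankandzeros}; your explicit injectivity check and the appeal to Proposition~\ref{prop_antichain} for the antichain claim are sound elaborations of points the paper leaves implicit.
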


\begin{proof}
By Lemma~\ref{triangle}, $D$ is an idempotent with all diagonal entries
equal to $0$. Thus by Theorem~\ref{thm_idempotentswithzeros} we have
$D_{ij} = \langle c_i \mid c_j \rangle$,  where $c_1, \dots, c_n$ denote
the columns of $D$. Thus it is immediate that
$$d(i,j) \ = \ -D_{i,j} \ = \ -\langle c_i \mid c_j \rangle \ = \ \delta(c_i, c_j).$$
\end{proof}

\begin{corollary}
\label{semimetricbracket}
Every semimetric space on $n$ points occurs as the residuation distance semimetric on an antichain of points in $\ft^n$. Moreover, the points may be chosen to be extremal points of a projective $n$-polytope in $\ft^n$ containing $\underline{0}$ in its interior.
\end{corollary}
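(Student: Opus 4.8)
The plan is to chain together the results already established for semimetrics. Given a semimetric space on $n$ points, represented by the function $d : [n] \times [n] \to \mathbb{R}$, I would first form the associated matrix $D = (-d(i,j))$. By Theorem~\ref{semimetricmatrix}, $D$ is a strongly regular idempotent and $\underline{0}$ is an interior point of the column space $C(D)$; in particular, by Theorem~\ref{IJKmain}, $C(D)$ is a projective $n$-polytope in $\ft^n$ containing $\underline{0}$ in its interior, which is exactly the geometric object demanded by the ``moreover'' clause of the statement.

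Next I would invoke Corollary~\ref{cor_matrixrows}, which tells us directly that the given semimetric space is isometric to the set of columns $c_1, \dots, c_n$ of $D$ equipped with residuation distance $\delta$, and moreover that these columns form an antichain in $\ft^n$. This immediately gives the first sentence of the statement: every semimetric space on $n$ points occurs as a residuation semimetric on an antichain. The only point requiring a little care is the word \emph{extremal} in the ``moreover'' clause: I must verify that the columns $c_1, \dots, c_n$ are genuinely the extremal points of the polytope $C(D)$ (up to scaling), not merely some generating set.

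The main step is therefore pinning down extremality. Here I would use the fact that $D$ is strongly regular, so $C(D)$ has generator dimension $n$; since any projective $n$-polytope in $\ft^n$ has exactly $n$ extremal points up to scaling, and the $n$ columns of $D$ generate $C(D)$, each column must occur (up to scaling) as an extremal point. One can argue this cleanly via Lemma~\ref{extremalsandzeros}: every extremal point of $C(D)$ occurs up to scaling as a column of $D$ with zero diagonal entry, and since $D$ is strongly regular all $n$ diagonal entries are zero (Corollary~\ref{rankandzeros}), so the $n$ extremal points are precisely the $n$ columns of $D$. Thus the antichain of columns furnished by Corollary~\ref{cor_matrixrows} is exactly the extremal point set of the required polytope.

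I expect the genuine obstacle to be only this bookkeeping about extremality versus mere generation; everything else is a direct assembly of prior results. It is worth emphasising in the write-up that the columns are distinct (as points of $\ft^n$, not merely up to scaling) precisely because $\delta$ restricts to a \emph{semimetric}, so that $\delta(c_i,c_j) = d(i,j) > 0$ for $i \neq j$ by condition~(d), ruling out any coincidence among the extremal points. With those observations in hand the proof is short: form $D$, apply Theorems~\ref{semimetricmatrix} and~\ref{IJKmain} to obtain the polytope, apply Corollary~\ref{cor_matrixrows} for the isometry, and apply Lemma~\ref{extremalsandzeros} with Corollary~\ref{rankandzeros} to identify the columns as the extremal points.
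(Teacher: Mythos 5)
Your proposal is correct and follows essentially the same route as the paper's own proof: form $D$, apply Theorem~\ref{semimetricmatrix} together with Theorem~\ref{IJKmain} to obtain a projective $n$-polytope $C(D)$ containing $\underline{0}$ in its interior, and invoke Corollary~\ref{cor_matrixrows} for the isometric realisation on the antichain of columns. The only difference is that the paper dismisses the identification of the columns with the extremal points of $C(D)$ as ``easy to see'', whereas you justify it explicitly via Lemma~\ref{extremalsandzeros}, Corollary~\ref{rankandzeros} and a counting argument based on generator dimension --- a welcome but inessential elaboration of the same argument.
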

\begin{proof}
Let $d:[n] \times [n] \rightarrow \mathbb{R}$ be a semimetric and let $D$
denote the $n \times n$ matrix given by $D = (-d(i,j))$. By
Corollary~\ref{cor_matrixrows}, this semimetric space can be realised as the
columns of $D$ with respect to the residuation distance. We claim the columns
of $D$ are extremal points of a projective $n$-polytope in $\ft^n$
containing $\underline{0}$ in its interior. Since $D$ is the matrix of
a semimetric, it follows from Theorem~\ref{semimetricmatrix} that $D$ is a
strongly regular idempotent with $\underline{0}$ in the interior of its
column space. The fact that $D$ is idempotent yields that $C(D)$ is projective (by Theorem~\ref{IJKmain}), whilst strong regularity yields that $C(D)$ has tropical dimension $n$. In other words, $C(D)$ is a projective $n$-polytope. Moreover, it is easy to see that the columns of $D$ are precisely the extremal points of $C(D)$, considered up to scaling. This completes the proof.
\end{proof}

We can define a distance function on $\ft^n$ using the classical mean (or tropical geometric mean) of the two residuation distances between two points:
$$d_H \ : \ \ft^n \times \ft^n \to \mathbb{R}, \ (x,y) \mapsto \frac{1}{2}\left(\delta(x,y) + \delta(y,x) \right).$$
This function is called the \textit{(tropical) Hilbert projective
metric}\footnote{In fact it is more common to define the metric to be the
(classical) sum of the two residuation distances, rather than the mean; the
distinction is basically immaterial but for our purposes considering the
means makes things a little neater.}, and
is widely used in tropical mathematics (see for example \cite{Cohen04,Develin04,K_tropd}).

It is immediate from the definition that $d_H$ is symmetric. It follows easily from the definition of
residuation that $d_H$ is non-negative and from Proposition~\ref{prop_resproperties} that it satisfies the
triangle inequality. This map is not quite a metric on $\ft^n$ since we have
$d_H(x, y) = 0$ if and only if $y = \lambda x$ for some $\lambda \in \ft$.
However, it induces a metric on tropical projective space $\pft^{n-1}$, and
is also a metric when restricted to any antichain in $\ft^n$.

\begin{corollary}
\label{cor_metric_rows}
Let $d:[n] \times [n] \rightarrow \mathbb{R}$ be a metric and let $D$ denote the $n \times n$ matrix given by $D = (-d(i,j))$. The finite metric space represented by $D$ is isometric to the subset of $\ft^n$ consisting of the columns [equivalently, rows] of $D$ with respect to the Hilbert projective metric.
\end{corollary}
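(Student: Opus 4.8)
The plan is to deduce this directly from the semimetric case already treated in Corollary~\ref{cor_matrixrows}, together with the symmetry that distinguishes a metric from a general semimetric. Since any metric is in particular a semimetric, Corollary~\ref{cor_matrixrows} applies verbatim: writing $c_1, \dots, c_n$ for the columns of $D$, it tells us both that these columns form an antichain in $\ft^n$ and that $\delta(c_i, c_j) = d(i,j)$ for all $i,j$. Because $d_H$ restricts to a genuine metric on any antichain (as noted in the discussion preceding this corollary), $d_H$ is an honest metric on the set $\{c_1, \dots, c_n\}$, so it is meaningful to ask for an isometry onto this subset.

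Next I would simply compute the Hilbert distance between two columns from the definition. We have $d_H(c_i, c_j) = \tfrac{1}{2}\left(\delta(c_i, c_j) + \delta(c_j, c_i)\right)$, and substituting the identity $\delta(c_i,c_j) = d(i,j)$ from Corollary~\ref{cor_matrixrows} gives $d_H(c_i, c_j) = \tfrac{1}{2}\left(d(i,j) + d(j,i)\right)$. This is the only point at which the metric hypothesis enters beyond the semimetric axioms: since $d$ is symmetric we have $d(i,j) = d(j,i)$, whence $d_H(c_i, c_j) = d(i,j)$. Thus the assignment $i \mapsto c_i$ is a distance-preserving bijection from the metric space $([n], d)$ onto the columns of $D$ equipped with $d_H$, that is, an isometry.

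Finally, to obtain the same statement with rows in place of columns, I would invoke Theorem~\ref{metricmatrix}, which yields $D = D^T$; hence the $i$th row of $D$ coincides with the $i$th column as a vector of $\ft^n$, and the claim for rows follows at once. I do not expect any genuine obstacle here, since the essential content was already established for residuation distance in Corollary~\ref{cor_matrixrows}; the passage to metrics and to $d_H$ amounts only to averaging the two (now equal) residuation distances. The one subtlety worth flagging explicitly is that $d_H$ is a metric only after restriction to an antichain, and it is precisely Corollary~\ref{cor_matrixrows} that guarantees the columns of $D$ form such an antichain.
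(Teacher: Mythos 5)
Your proposal is correct and follows essentially the same route as the paper: the paper likewise computes $d_H(c_i,c_j) = -\tfrac{1}{2}(D_{i,j}+D_{j,i}) = d(i,j)$ using symmetry, citing Theorem~\ref{thm_idempotentswithzeros} directly where you cite its immediate consequence Corollary~\ref{cor_matrixrows}. The only (harmless) differences are cosmetic: you make explicit the antichain/metric point and the row case via $D=D^T$, both of which the paper leaves implicit.
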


\begin{proof}
By Theorem~\ref{thm_idempotentswithzeros}, $D_{i,j} = \langle c_i \mid c_j \rangle$, where $c_1, \dots, c_n$ denote the columns of $D$. Now, since $D$ is symmetric, we obtain
$$d_H(c_i, c_j) = -\frac{1}{2}(\langle c_i | c_j \rangle + \langle c_j | c_i \rangle)= -\frac{1}{2}(D_{i,j} + D_{j,i}) = -D_{i,j} = d(i,j).$$
\end{proof}

\begin{corollary}
\label{metricbracket}
Every $n$-point metric space is realised as the extremal points (considered up to scaling) of a projective $n$-polytope in $\ft^n$ containing $\underline{0}$ in its interior with respect to the Hilbert projective metric.
\end{corollary}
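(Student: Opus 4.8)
The plan is to mirror the proof of Corollary~\ref{semimetricbracket}, replacing residuation distance by the Hilbert projective metric throughout and exploiting the symmetry that distinguishes a metric from a general semimetric. First I would observe that every metric is in particular a semimetric, so that all the purely geometric conclusions obtained in the semimetric case remain available verbatim. Concretely, given an $n$-point metric $d$, I set $D = (-d(i,j))$; by Theorem~\ref{metricmatrix} the matrix $D$ is a strongly regular symmetric idempotent, and since $d$ is in particular a semimetric, Theorem~\ref{semimetricmatrix} shows that $\underline{0}$ lies in the interior of the column space $C(D)$.

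Next I would reassemble the polytope exactly as before. The idempotency of $D$ gives, via Theorem~\ref{IJKmain}, that $C(D)$ is projective, while strong regularity forces $C(D)$ to have tropical dimension $n$; hence $C(D)$ is a projective $n$-polytope containing $\underline{0}$ in its interior. As in the semimetric case, the columns of $D$ are precisely the extremal points of $C(D)$ considered up to scaling (via Lemma~\ref{extremalsandzeros} together with the fact that all diagonal entries of a strongly regular idempotent equal $0$, by Corollary~\ref{rankandzeros}). This supplies exactly the polytope data demanded by the statement.

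The final and genuinely metric-specific step is to upgrade the distance function. Where Corollary~\ref{semimetricbracket} invoked residuation distance through Corollary~\ref{cor_matrixrows}, here I would instead apply Corollary~\ref{cor_metric_rows}, which uses the symmetry of $D$ to establish $d_H(c_i,c_j) = -\tfrac{1}{2}(D_{i,j}+D_{j,i}) = -D_{i,j} = d(i,j)$ for the columns $c_1,\dots,c_n$ of $D$. Thus the map $i \mapsto c_i$ is an isometry from the metric space onto the extremal points of $C(D)$ equipped with the Hilbert projective metric (which is well defined on equivalence classes up to scaling, so the choice of representative columns is immaterial).

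I do not anticipate any serious obstacle: every ingredient has already been proved, and the argument is essentially bookkeeping. The only points requiring care are confirming that a metric satisfies the hypotheses of the semimetric results, so that the interior-point and extremality claims transfer unchanged, and verifying that it is precisely the symmetry of $D$ that converts the one-sided residuation identity $D_{i,j} = \langle c_i \mid c_j \rangle$ into the two-sided Hilbert identity recovering $d(i,j)$.
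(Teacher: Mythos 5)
Your proposal is correct and follows essentially the same route as the paper: the paper's proof likewise invokes Corollary~\ref{cor_metric_rows} for the Hilbert-metric isometry and then cites ``the same line of reasoning as in the proof of Corollary~\ref{semimetricbracket}'' for the geometric claims, which is precisely the argument you spell out via Theorems~\ref{metricmatrix}, \ref{semimetricmatrix} and \ref{IJKmain}. Your version merely unpacks that cross-reference explicitly, which is fine.
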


\begin{proof}
Let $d:[n] \times [n] \rightarrow \mathbb{R}$ be a metric and let $D$ denote the $n \times n$ matrix given by $D = (-d(i,j))$. By Corollary~\ref{cor_metric_rows}, this metric space can be realised as the columns of $D$ with respect to the Hilbert projective metric and, by the same line of
reasoning as in the proof of Corollary~\ref{semimetricbracket}, the columns
of $D$ are the extremal points (considered up to scaling) of a pure $n$-dimensional tropical polytope in $\ft^n$ containing $\underline{0}$ in its interior.
\end{proof}

\section{Duality and Symmetry}\label{sec_duality}
By Theorem~\ref{numidmpt}, there is a natural one-to-one correspondence
between projective $n$-polytopes in $\ft^n$ and strongly regular idempotents.
In Section~\ref{semi} we exhibited a natural one-to-one correspondence
between semimetrics
on $n$-points and projective $n$-polytopes in $\ft^n$ containing
$\underline{0}$ in their interior. Given such a polytope, we might
ask if there is a way to see in the polytope whether the corresponding
semimetric is a metric, or equivalently, whether the corresponding
idempotent is symmetric.

Given a matrix $A \in M_n(\ft)$, the \textit{duality map} of $A$ is
$$\theta_A \ : \ R(A) \rightarrow C(A), \ x \mapsto A \otimes (-x),$$
where we ignore the distinction between row and column vectors to minimise the notation.
The duality map is widely used in tropical mathematics (see for example
\cite{Cohen04,Develin04,K_tropd}). It is a canonical bijection between the
row space and the column space of $A$ and although it is not a linear isomorphism,
it preserves a remarkable amount of geometric and order-theoretic structure.
For example, it is an antitone lattice isomorphism \cite{Cohen04}, a
combinatorial isomorphism of Euclidean polyhedral complexes \cite{Develin04},
a homeomorphism and an isometry with respect to the Hilbert projective
metric \cite{K_tropd}, and an \textit{anti-isomorphism} (in a certain
sense) of modules \cite{K_tropd}.

\begin{theorem}\label{thm_idptrowcolumn}
Let $E \in M_n(\ft)$ be a strongly regular idempotent. Then $C(E) = - R(E)$,
and $\theta_E(x) = -x$ for all $x \in R(E)$.
\end{theorem}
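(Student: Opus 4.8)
The plan is to reduce the whole statement to a single coordinatewise computation of $\theta_E(x) = E \otimes (-x)$, exploiting two features of a strongly regular idempotent: its diagonal is zero, and the elements of its row space are exactly the fixed points of right multiplication by $E$.

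First I would record the two elementary facts that do all the work. By Corollary~\ref{rankandzeros}, strong regularity forces $E_{k,k} = 0$ for every $k$. Secondly, since $E$ is idempotent, any $x \in R(E)$ is a tropical combination of rows and hence satisfies $x \otimes E = x$; writing this out coordinatewise gives $x_j = \bigoplus_k x_k \otimes E_{k,j}$ for every $j$, so that $E_{k,j} \otimes x_k \le x_j$ for all indices $k, j$. Conversely, $x \otimes E = x$ already displays $x$ as a combination of rows, so that $R(E)$ is precisely the set of such fixed points; I will reuse this characterisation for the dual inclusion.

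The heart of the argument is then to verify $\theta_E(x) = -x$ for every $x \in R(E)$ by comparing coordinates. Fixing $k$, we have $\theta_E(x)_k = \bigoplus_j E_{k,j} \otimes (-x_j) = \max_j (E_{k,j} - x_j)$. Choosing $j = k$ and using $E_{k,k} = 0$ gives $\theta_E(x)_k \ge -x_k$, while the inequality $E_{k,j} \otimes x_k \le x_j$ from the previous paragraph rearranges to $E_{k,j} - x_j \le -x_k$ for every $j$, giving $\theta_E(x)_k \le -x_k$. Hence $\theta_E(x)_k = -x_k$ for all $k$, i.e.\ $\theta_E(x) = -x$. The identity $C(E) = -R(E)$ then follows: since $-x = \theta_E(x) = E \otimes (-x)$ is a combination of columns of $E$, we get $-R(E) \subseteq C(E)$; and for $y \in C(E)$ the \emph{dual} computation (idempotency gives $E \otimes y = y$, hence $E_{i,j} \otimes y_j \le y_i$, and the same two-sided estimate shows $(-y) \otimes E = -y$, so $-y \in R(E)$) yields $C(E) \subseteq -R(E)$. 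Alternatively, one may simply invoke the already-cited bijectivity of $\theta_E$ from $R(E)$ onto $C(E)$, whence $C(E) = \theta_E(R(E)) = -R(E)$.

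I expect the only genuine obstacle to be conceptual rather than computational. Because $\theta_E$ incorporates the negation $x \mapsto -x$, it is \emph{not} a module morphism, so it is not enough to check $\theta_E(r_i) = -r_i$ on the generating rows (which one could do directly from Theorem~\ref{thm_idempotentswithzeros}) and extend by linearity — that extension is simply unavailable. The device that circumvents this is the fixed-point description $R(E) = \{x : x \otimes E = x\}$, which delivers the key inequality $E_{k,j} \otimes x_k \le x_j$ uniformly for \emph{all} $x \in R(E)$, and not merely for the rows; once this is in hand, the coordinate estimate closes the argument immediately.
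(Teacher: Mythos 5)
Your proof is correct, but it is genuinely different from the one in the paper. The paper argues topologically: for an interior point $x$ of $R(E)$, the duality map is an isometry, so $\theta_E(x)$ is interior to $C(E)$; since left multiplication by $E$ fixes $C(E)$ and (by Lemma~\ref{exterior}) sends all exterior points onto the boundary, $-x$ must itself lie in the interior of $C(E)$ and be fixed, giving $\theta_E(x)=-x$ on the interior; pure dimension $n$ (Theorem~\ref{IJKmain}) then makes $R(E)$ the closure of its interior, and continuity plus bijectivity of $\theta_E$ finish the argument. You instead work entirely algebraically: the fixed-point description $R(E)=\{x : x\otimes E = x\}$ gives the uniform inequality $E_{k,j}\otimes x_k \le x_j$, the zero diagonal (Corollary~\ref{rankandzeros}) gives the matching lower bound, and the two-sided coordinate estimate yields $\theta_E(x)=-x$ directly, with $C(E)=-R(E)$ following from the dual computation (or from bijectivity of $\theta_E$). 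Your observation that one cannot simply check the identity on the generating rows and extend by linearity, because $\theta_E$ is not a module morphism, is exactly the right point, and the fixed-point device is the correct way around it. What your route buys is notable: it avoids the isometry property of $\theta_E$, Lemma~\ref{exterior}, and all topological input, and since it uses strong regularity only through the conclusion of Corollary~\ref{rankandzeros}, it actually proves the stronger statement that $\theta_E(x)=-x$ on $R(E)$ and $C(E)=-R(E)$ for \emph{every} idempotent with all diagonal entries equal to $0$ (equivalently, every Kleene star), regardless of rank — in effect extending Theorem~\ref{thm_idempotentswithzeros} from the rows and columns themselves to all points of the row space. What the paper's route buys is coherence with its geometric machinery, which it needs elsewhere anyway, and an explanation of \emph{why} the result holds in terms of the polytope's geometry.
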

\begin{proof}
Let $x \in R(E)$ be an interior point of $R(E)$. Since $\theta_E$ induces an
isometry, $\theta_E(x) = E \otimes (-x)$ must also be an interior point of
$C(E)$. Now, since $E$ is strongly regular idempotent, left multiplication by
$E$ fixes $C(E)$ and by Lemma~\ref{exterior} maps everything outside
$C(E)$ onto the boundary of $C(E)$. Thus $(-x)$ must be an interior
point of $C(E)$ and hence $E\otimes(-x) = (-x)$. So, regarding $R(E)$
and $C(E)$ as subspaces of the same space $\ft^n$, we have shown that the
duality map $\theta_E$ restricted to interior points of $R(E)$ is merely
negation.

Since $E$ is strongly regular we have, by Theorem~\ref{IJKmain}, that
$R(E)$ has pure dimension $n$. It follows that $R(E)$ is the closure of
its interior. Thus, every point of $R(E)$ is a limit of interior points
and so, by continuity of both the duality map and negation,
$\theta_E(x) = -x$ on the
whole of $R(E)$. Since $\theta_E$ is a bijection from $C(E)$ to $R(E)$
this shows that $C(E) = - R(E)$.
\end{proof}

\begin{theorem}
\label{negationclosed}
Let $X \subseteq \ft^n$ be a projective $n$-polytope. Then $X$ is the row
space of a symmetric idempotent if and only if $X = -X$.
\end{theorem}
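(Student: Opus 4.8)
The plan is to exploit Theorem~\ref{thm_idptrowcolumn}, which for a strongly regular idempotent $E$ identifies the column space with the negation of the row space, namely $C(E) = -R(E)$. Since $X$ is a projective $n$-polytope, any idempotent $E$ with $R(E) = X$ is automatically strongly regular (its row space has dimension $n$), so this identity is available throughout and supplies the link between negation-closure and symmetry.

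For the forward implication, I would suppose $X = R(E)$ for a symmetric idempotent $E$. Because $E = E^T$, the rows of $E$ coincide, as vectors in $\ft^n$, with its columns, so $R(E) = C(E)$. Combining this with $C(E) = -R(E)$ from Theorem~\ref{thm_idptrowcolumn} then yields $X = R(E) = C(E) = -R(E) = -X$, as required.

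For the converse, assume $X = -X$. Here I first need the row-space analogue of Theorem~\ref{numidmpt}(i): a projective $n$-polytope is the row space of a \emph{unique} idempotent. This follows by transposition, since $E \mapsto E^T$ is a bijection on idempotents (as $(E^T)^2 = (E^2)^T = E^T$) which interchanges row and column spaces, so uniqueness of the idempotent with a prescribed column space transfers to row spaces. Let $E$ be the unique idempotent with $R(E) = X$; it is strongly regular. Then $E^T$ is also a strongly regular idempotent, with row space $R(E^T) = C(E)$. Applying Theorem~\ref{thm_idptrowcolumn} gives $C(E) = -R(E) = -X = X$, so $E^T$ is an idempotent with row space $X$. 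By the uniqueness just established, $E^T = E$, whence $E$ is symmetric and $X = R(E)$ is the row space of a symmetric idempotent.

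The whole argument is short once Theorem~\ref{thm_idptrowcolumn} is in hand, and I expect no genuine difficulty in the main chain of equalities. The one point requiring care, and the place I would flag as the main (if modest) obstacle, is the passage from the column-space uniqueness of Theorem~\ref{numidmpt} to the row-space version used in the converse: this must be justified via the transpose bijection rather than merely asserted, so that the final appeal to uniqueness is legitimate.
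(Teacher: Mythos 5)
Your proof is correct and follows essentially the same route as the paper's: the forward direction via $R(E)=C(E)$ for symmetric $E$ combined with Theorem~\ref{thm_idptrowcolumn}, and the converse by showing $E^T$ is an idempotent with row space $X$ and invoking uniqueness to conclude $E=E^T$. The only difference is that you explicitly justify transferring the uniqueness statement of Theorem~\ref{numidmpt} from column spaces to row spaces via the transpose bijection, a step the paper uses without comment; this is a point of care in your favour, not a divergence in method.
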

\begin{proof}
Since $X$ is a projective $n$-polytope in $\ft^n$, Theorem~\ref{numidmpt} tells us that there is a unique idempotent $E$ such that $X$ is the row space of $E$ and it is clear that this idempotent has rank $n$. Suppose first that $E$ is symmetric. Then, by Theorem~\ref{thm_idptrowcolumn}, we have
$$X = R(E) = C(E) = -R(E) = -X.$$

Suppose now that $X=-X$. By Theorem~\ref{thm_idptrowcolumn}, $X=R(E)=-C(E)$ so that $-X= C(E)$. Since $X=-X$ this yields  $R(E) = C(E)= R(E^T)$. Notice that $E^T$ is also idempotent. But $E$ is the unique idempotent with $R(E)=X$,  so we must have $E = E^T$, that is, $E$ is symmetric.
\end{proof}

\begin{theorem}\label{thm_negclose}
There is a bijection between $n$-point metrics and negation-closed projective $n$-polytopes, given by $\chi : d \mapsto C((-d(i,j)))$.
\end{theorem}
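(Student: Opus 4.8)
The plan is to assemble the bijection of Theorem~\ref{thm_negclose} by combining the bijection $\chi$ already established in Corollary~\ref{semimetric1-1} with the characterisation of symmetry furnished by Theorem~\ref{negationclosed}. Recall that Corollary~\ref{semimetric1-1} gives a bijection $\chi : \mathcal{F}_n \to \mathcal{P}_n$ between semimetrics on $n$ points and projective $n$-polytopes containing $\underline{0}$ in their interior, via $d \mapsto C((-d(i,j)))$. The metrics are precisely the symmetric semimetrics, so the natural strategy is to restrict $\chi$ to the metrics and show that its image is exactly the negation-closed polytopes in $\mathcal{P}_n$.

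First I would observe that by Theorem~\ref{metricmatrix}, $d$ is a metric if and only if $D = (-d(i,j))$ is a strongly regular symmetric idempotent. Thus I must show that, among the polytopes in $\mathcal{P}_n$, the image under $\chi$ of the metrics consists precisely of the negation-closed ones. The key link is Theorem~\ref{negationclosed}, which says that a projective $n$-polytope $X$ is the \emph{row} space of a symmetric idempotent if and only if $X = -X$. Since the unique idempotent $E$ with column space $X$ (Theorem~\ref{numidmpt}(i)) is symmetric exactly when the unique idempotent with row space $X$ is symmetric — indeed $C(E) = X$ means $R(E^T) = X$, and $E$ is symmetric iff $E = E^T$ — the column-space and row-space formulations of symmetry coincide. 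Hence $X \in \mathcal{P}_n$ is the column space of a symmetric idempotent if and only if $X = -X$.

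Putting these together: given a metric $d$, the matrix $D$ is a symmetric strongly regular idempotent with $\underline{0}$ interior to $C(D)$, so $\chi(d) = C(D) \in \mathcal{P}_n$ is negation-closed. Conversely, given a negation-closed $X \in \mathcal{P}_n$, by Corollary~\ref{semimetric1-1} there is a unique semimetric $d$ with $\chi(d) = C(D) = X$; negation-closure of $X$ forces $D$ to be symmetric (via Theorem~\ref{negationclosed} transported to column spaces as above), and then Theorem~\ref{metricmatrix} upgrades $d$ to a metric. Injectivity and surjectivity of the restricted map then follow immediately from the corresponding properties of $\chi$ on all of $\mathcal{F}_n$.

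I expect the only genuine subtlety to be the bookkeeping that ties negation-closure of the \emph{column} space to symmetry, since Theorem~\ref{negationclosed} is phrased in terms of row spaces; this is resolved by the transpose observation that $C(E) = R(E^T)$ and that transposition preserves idempotency together with the uniqueness clause of Theorem~\ref{numidmpt}(i). Everything else is a direct restriction of the already-proved bijection $\chi$, so no new calculation should be required.
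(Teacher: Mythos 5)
Your strategy---restricting the bijection of Corollary~\ref{semimetric1-1} to metrics and identifying the image via Theorem~\ref{metricmatrix} and Theorem~\ref{negationclosed}---is exactly the paper's, and your explicit treatment of the row-space/column-space mismatch in Theorem~\ref{negationclosed} (via $C(E)=R(E^T)$, preservation of idempotency under transposition, and the uniqueness clause of Theorem~\ref{numidmpt}(i)) is correct; the paper in fact glosses over this point, implicitly using that a symmetric matrix has equal row and column spaces.

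There is, however, a genuine (if small) gap in your surjectivity argument. The theorem claims a bijection onto \emph{all} negation-closed projective $n$-polytopes, but your converse direction begins ``given a negation-closed $X \in \mathcal{P}_n$'', i.e.\ you only treat negation-closed polytopes already assumed to contain $\underline{0}$ in their interior. Nothing in your argument excludes the possibility of a negation-closed projective $n$-polytope lying outside $\mathcal{P}_n$, and such a polytope would not be reached by your restricted map; as written, you have proved a bijection between metrics and $\lbrace X \in \mathcal{P}_n : X = -X\rbrace$, which is a priori a proper subset of the stated codomain. The paper avoids this by not routing the converse through Corollary~\ref{semimetric1-1} at all: given an \emph{arbitrary} negation-closed projective $n$-polytope $X$, Theorem~\ref{negationclosed} produces a strongly regular symmetric idempotent $D$ with $X = C(D)$, and Theorem~\ref{metricmatrix} then says $d(i,j) = -D_{i,j}$ is a metric with $\chi(d) = X$; that $X$ lies in $\mathcal{P}_n$ emerges as a consequence rather than being assumed. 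Alternatively, you can close the gap directly: by Theorem~\ref{negationclosed} and Lemma~\ref{symmetric}, the idempotent attached to a negation-closed projective $n$-polytope has zero diagonal and negative off-diagonal entries, so Theorem~\ref{semimetricmatrix} ((ii) $\Rightarrow$ (iv)) places $\underline{0}$ in its interior. Either repair is one line, but it is needed for the proof to match the statement.
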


\begin{proof}
By Corollary~\ref{semimetric1-1}, the map $\chi: d \mapsto C((-d(i,j)))$ is a bijection between $n$-point semimetrics and projective $n$-polytopes containing $\underline{0}$ in their interior.  We consider the restriction of $\chi$ to the set of $n$-point metrics. Let $d$ be a metric on
$[n]$ and let $D=(-d(i,j))$. Then $\chi(d) = C(D)$ is a projective $n$-polytope containing the point $\underline{0}$ in its interior. By Theorem~\ref{metricmatrix}, $D$ is a symmetric idempotent. Thus Theorem~\ref{negationclosed} gives that $C(D)$ is negation closed. Hence $\chi$ maps each $n$-point metric to a negation-closed projective $n$-polytope.

Now let $X$ be a negation-closed projective $n$-polytope. By Theorem~\ref{negationclosed}, $X$ is the column space of a strongly regular symmetric idempotent matrix $D$. Now let $d: [n] \times [n] \rightarrow \mathbb{R}$ be defined by $d(i,j)=-D_{i,j}$.
Theorem~\ref{metricmatrix} gives that $d$ is a metric and it is clear that
$\chi(d) = C(D)$.
\end{proof}

We digress briefly to mention an application in semigroup theory. Recall
that on any monoid $M$, equivalence relations $\GreenR$ and $\GreenH$ may
be defined by $a \GreenR b$ if $aM = bM$ and $a \GreenH b$ if $aM = bM$
and $Ma = Mb$. An element $a \in M$ is called \textit{von Neumann regular} if
there is an element $b \in M$ such that $aba = a$. It is a standard fact
of semigroup theory that an element is von Neumann regular if and only if
it is $\GreenR$-related to an idempotent, and that the $\GreenH$-classes of
idempotents are exactly the maximal subgroups. In the case of $M_n(\ft)$ (with respect to matrix multiplication),
two matrices are $\GreenR$-related if they have the same column space, and
$\GreenH$-related if they have the same column space and the same row
space \cite{K_tropicalgreen}. Theorem~\ref{numidmpt} (or \cite[Theorem 5.7]{K_puredim})
guarantees that a von Neumann regular matrix of maximal rank is $\GreenR$-related to a
unique idempotent, and hence corresponds to a unique maximal subgroup.
Theorem~\ref{thm_idptrowcolumn} thus gives an explicit description of
the matrices which comprise that subgroup.
\begin{corollary}
Let $M$ be a von Neumann regular matrix of rank $n$. Then the unique
maximal subgroup in the $\GreenR$-class of $M$ is the set of all matrices
$N$ with $C(N) = C(M)$ and $R(N) = -C(M)$.
\end{corollary}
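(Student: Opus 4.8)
The plan is to reduce everything to the unique idempotent sitting at the bottom of the $\GreenR$-class of $M$ and then apply Theorem~\ref{thm_idptrowcolumn}. First I would invoke the standard semigroup facts recalled just above the statement: a von Neumann regular element is $\GreenR$-related to an idempotent, and the maximal subgroup is the $\GreenH$-class of that idempotent. Since $M$ has rank $n$, any idempotent $\GreenR$-related to it is also of rank $n$, hence strongly regular; and Theorem~\ref{numidmpt} guarantees it is \emph{unique}. Call this idempotent $E$. The unique maximal subgroup in the $\GreenR$-class of $M$ is then precisely the $\GreenH$-class of $E$.

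Next I would translate membership in this $\GreenH$-class into a condition on row and column spaces. Using the description of Green's relations in $M_n(\ft)$ from \cite{K_tropicalgreen} cited above, a matrix $N$ lies in the $\GreenH$-class of $E$ if and only if $C(N) = C(E)$ and $R(N) = R(E)$. Since $M \mathrel{\GreenR} E$ we have $C(M) = C(E)$, so the condition on column spaces immediately becomes $C(N) = C(M)$.

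It remains to rewrite the row-space condition in terms of $C(M)$, and this is where the substantive geometric input enters. Here I would apply Theorem~\ref{thm_idptrowcolumn}: because $E$ is a strongly regular idempotent, $C(E) = -R(E)$, equivalently $R(E) = -C(E) = -C(M)$. Substituting this gives that $N$ lies in the maximal subgroup if and only if $C(N) = C(M)$ and $R(N) = -C(M)$, which is exactly the asserted description.

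There is essentially no genuine obstacle beyond careful bookkeeping, since all the real work is already packaged in Theorem~\ref{thm_idptrowcolumn} and in the cited characterisations of Green's relations. The only point requiring slight care is confirming that the idempotent $E$ furnished by the semigroup theory is the same strongly regular idempotent to which Theorem~\ref{thm_idptrowcolumn} applies; this follows at once from the fact that its rank equals $n$.
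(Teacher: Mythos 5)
Your proof is correct and follows essentially the same route as the paper, which derives this corollary from the discussion immediately preceding it: the standard semigroup facts about regular elements and maximal subgroups, the characterisation of $\GreenR$ and $\GreenH$ in $M_n(\ft)$ via column and row spaces, uniqueness of the idempotent from Theorem~\ref{numidmpt}, and finally $R(E) = -C(E)$ from Theorem~\ref{thm_idptrowcolumn}. Your added remark that the idempotent inherits rank $n$ (hence strong regularity) from $C(E) = C(M)$ is exactly the bookkeeping the paper leaves implicit.
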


\section{Isometries and Maximal Subgroups} \label{sec_groups}

In recent work \cite{K_tropgroups} the present authors together with Zur Izhakian showed that every maximal subgroup of $M_n(\ft)$ is isomorphic to a group of the form $\mathbb{R} \times \Sigma$, where $\Sigma$ is a subgroup of the symmetric group $S_n$. In particular, let us consider the
case where $D$ is an idempotent matrix in $M_n(\ft)$ corresponding to a \emph{metric} on $n$ points, and let $H_D$ denote the corresponding $\GreenH$-class. We shall show that in this case the associated finite group $\Sigma$ is isomorphic to the isometry group of the finite metric space that we started with. To this end we shall require some additional notation and some results from \cite{K_tropgroups}.

Let $\trop =\ft\cup\{-\infty\}$ and extend the definitions of $\oplus$ and $\otimes$, so that $a \oplus -\infty = -\infty \oplus a = a$ and $a \otimes -\infty = -\infty \otimes a = -\infty$, for all $a \in \trop$. Consider the monoid $M_n(\trop)$ consisting of all $n \times n$ matrices with entries in $\trop$, with respect to the matrix multiplication induced from the operations on $\trop$. It is well known that the \emph{units} in $M_n(\trop)$ are the matrices that contain precisely one entry from $\ft$ in every row and every column. Thus every invertible matrix in $M_n(\trop)$ can be written as the product of a \emph{tropical diagonal matrix} (that is, a matrix with entries from $\ft$ on the diagonal and $-\infty$ entries elsewhere) and a \emph{tropical permutation matrix} (that is, a matrix with precisely one zero entry in every row and every column and all other entries equal to $-\infty$). Now let $E$ be an idempotent in $M_n(\ft)$. It was shown in \cite[Theorem 7.3]{K_tropgroups} that the $\GreenH$-class of $E$ is isomorphic to the group consisting of all units commuting with $E$.

\begin{lemma}
\label{lem_isoms}
Let $d:[n] \times [n] \rightarrow \mathbb{R}$ be a metric and let $D$ denote the $n \times n$ matrix given by $D = (-d(i,j))$. A permutation matrix commutes with $D$ if and only if the corresponding permutation is an isometry of the metric space $([n],d)$.
\end{lemma}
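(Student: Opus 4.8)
The plan is to reduce the commutation condition directly to an identity among the entries of $D$, and then to recognise that identity as the statement that $\sigma$ preserves distances. Since everything is a finite tropical computation, I expect no substantial obstacle; the only thing demanding care is the index bookkeeping, and in particular keeping track of $\sigma$ versus $\sigma^{-1}$ when left- and right-multiplying.

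First I would fix conventions. Write $P_\sigma \in M_n(\trop)$ for the tropical permutation matrix associated to $\sigma \in S_n$, so that $(P_\sigma)_{i,j} = 0$ when $j = \sigma(i)$ and $(P_\sigma)_{i,j} = -\infty$ otherwise. For any $A \in M_n(\ft)$ the tropical products with $P_\sigma$ are obtained by permuting rows and columns: in each entry of $P_\sigma A$ (respectively $A P_\sigma$) exactly one term of the tropical sum is finite, the remaining terms being $-\infty$, and one reads off $(P_\sigma A)_{i,j} = A_{\sigma(i), j}$ and $(A P_\sigma)_{i,j} = A_{i, \sigma^{-1}(j)}$. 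Thus left multiplication by $P_\sigma$ permutes rows and right multiplication permutes columns.

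Next I would specialise to $A = D$. The two products become $(P_\sigma D)_{i,j} = D_{\sigma(i), j}$ and $(D P_\sigma)_{i,j} = D_{i, \sigma^{-1}(j)}$, so $P_\sigma$ commutes with $D$ if and only if $D_{\sigma(i), j} = D_{i, \sigma^{-1}(j)}$ for all $i,j \in [n]$. Replacing $j$ by $\sigma(j)$, which is harmless since $\sigma$ is a bijection, rewrites this as the more symmetric condition $D_{\sigma(i), \sigma(j)} = D_{i,j}$ for all $i,j \in [n]$.

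Finally, since $D_{i,j} = -d(i,j)$ by definition, the displayed condition reads $d(\sigma(i), \sigma(j)) = d(i,j)$ for all $i,j \in [n]$, which is precisely the statement that the permutation $\sigma$ is an isometry of $([n],d)$. Reading the same chain of equivalences in either direction gives both implications simultaneously, completing the argument. I would note in passing that the symmetry of $d$ (that is, that $d$ is a metric rather than merely a semimetric) plays no role in this particular lemma: the computation establishes the analogous statement for any distance function on $[n]$.
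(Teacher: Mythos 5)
Your proof is correct and takes essentially the same approach as the paper's: both reduce the commutation condition $PD = DP$ to the entrywise identity $D_{\sigma(i),\sigma(j)} = D_{i,j}$ and recognise this as the isometry condition $d(\sigma(i),\sigma(j)) = d(i,j)$. You simply spell out the index bookkeeping that the paper compresses into ``it follows immediately,'' and your closing observation that symmetry of $d$ plays no role is also accurate.
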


\begin{proof}
The isometries of $([n],d)$ are precisely those permutations $\sigma \in S_n$ such that $d(\sigma(i), \sigma(j)) = d(i,j)$ for all $i$ and $j$. In other words, they are the permutations $\sigma$ such that such that $D_{\sigma(i),\sigma(j)} = D_{i,j}$. It follows immediately that the isometries of $([n],d)$ correspond exactly to the permutation matrices $P$ satisfying $PDP^{-1} = D$.
\end{proof}

\begin{lemma}
\label{lem_perms}
Let $d:[n] \times [n] \rightarrow \mathbb{R}$ be a metric and let $D$ denote the $n \times n$ matrix given by $D = (-d(i,j))$. If $G \in M_n(\trop)$
is a unit commuting with $D$ then $G = \lambda \otimes P$ for some $\lambda \in \ft$ and some permutation matrix $P$.
\end{lemma}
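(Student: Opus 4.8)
The plan is to exploit the explicit description of the units of $M_n(\trop)$ recalled immediately before the statement, together with the \emph{symmetry} of $D$ coming from the metric hypothesis. First I would record the shape of $G$. Since $G$ is a unit it has exactly one finite (that is, $\ft$-valued) entry in each row and each column, so there is a permutation $\sigma \in S_n$ and scalars $\mu_1, \dots, \mu_n \in \ft$ with $G_{i,\sigma(i)} = \mu_i$ and all remaining entries equal to $-\infty$; equivalently $G = \Delta \otimes P$, where $\Delta$ is the tropical diagonal matrix with diagonal entries $\mu_i$ and $P$ is the tropical permutation matrix of $\sigma$. The desired conclusion $G = \lambda \otimes P$ is then precisely the assertion that all the $\mu_i$ coincide.

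Next I would convert the commuting relation $G \otimes D = D \otimes G$ into a system of scalar identities. Because every entry of $D$ is finite, the only surviving term in each tropical sum is the one arising from the unique finite entry of $G$: computing $(i,j)$ entries gives $(G \otimes D)_{i,j} = \mu_i + D_{\sigma(i),j}$ and $(D \otimes G)_{i,j} = D_{i,\sigma^{-1}(j)} + \mu_{\sigma^{-1}(j)}$. Equating these and then substituting $\sigma(j)$ for $j$ yields the clean identity $\mu_i + D_{\sigma(i),\sigma(j)} = D_{i,j} + \mu_j$ for all $i,j$, or equivalently $D_{\sigma(i),\sigma(j)} - D_{i,j} = \mu_j - \mu_i$.

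The crucial step, and the one place where the \emph{metric} (rather than merely semimetric) hypothesis enters, is to invoke symmetry. By Theorem~\ref{metricmatrix} we have $D = D^T$, so $D_{\sigma(i),\sigma(j)} = D_{\sigma(j),\sigma(i)}$ and $D_{i,j} = D_{j,i}$. Applying the displayed identity to the pair $(i,j)$ and to the pair $(j,i)$ and comparing the two, the symmetry of $D$ makes the left-hand sides equal, forcing $\mu_j - \mu_i = \mu_i - \mu_j$, hence $\mu_i = \mu_j$ for all $i,j$. Writing $\lambda$ for this common value gives $G = \lambda \otimes P$, as required.

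I expect the main (indeed essentially the only) subtlety to lie in organising the index bookkeeping in the commuting relation correctly, in particular tracking $\sigma$ and $\sigma^{-1}$ so as to reach the symmetric form of the identity. The symmetry argument that then collapses all the $\mu_i$ to a single scalar is very short, but it is exactly the step that would fail without $D = D^T$; this is consistent with the fact that for a general (non-symmetric) semimetric the scalars $\mu_i$ need not be equal, so that the metric hypothesis is genuinely being used here.
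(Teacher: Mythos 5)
Your proof is correct, and it takes a genuinely different route from the paper. The paper argues geometrically: it invokes results from \cite{K_tropgroups} to say that left multiplication by a unit commuting with $D$ is a linear automorphism of $C(D)$, observes that such automorphisms preserve the Hilbert projective metric, and then uses Corollary~\ref{cor_metric_rows} to transfer this to an isometry $\sigma$ of $([n],d)$; Lemma~\ref{lem_isoms} then gives $PD = DP$, so the diagonal part $S = GP^{-1}$ commutes with $D$ and must be scalar. You instead expand the relation $G \otimes D = D \otimes G$ entrywise, using that $G$ has exactly one finite entry per row and column and that every entry of $D$ is finite, to get the identity $\mu_i + D_{\sigma(i),\sigma(j)} = D_{i,j} + \mu_j$, and then let symmetry of $D$ collapse all the $\mu_i$ (comparing the identity at $(i,j)$ and $(j,i)$ gives $\mu_j - \mu_i = \mu_i - \mu_j$). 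Your computation checks out, including the index bookkeeping with $\sigma^{-1}$. What each approach buys: yours is elementary and self-contained --- it needs no machinery from \cite{K_tropgroups}, no Hilbert metric, and in fact not even idempotency of $D$, since any finite symmetric matrix would do (citing Theorem~\ref{metricmatrix} for symmetry is overkill; condition (e) of the definition gives $D = D^T$ directly). The paper's route is longer but establishes along the way that the underlying permutation $\sigma$ is an isometry of $([n],d)$, which is the conceptual content feeding into Theorem~\ref{thm_isomgp}; note, though, that your identity specialised to $\mu_i \equiv \lambda$ reads $D_{\sigma(i),\sigma(j)} = D_{i,j}$, so it recovers this fact too, and nothing downstream is lost. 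Your closing remark is also accurate: for a genuinely asymmetric semimetric the $\mu_i$ need not coincide (e.g.\ for $n=2$, $D$ with off-diagonal entries $a \neq b$ commutes with the unit having finite entries $G_{1,2}=0$, $G_{2,1}=b-a$), so the symmetry hypothesis is exactly what the lemma requires.
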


\begin{proof}
Let $D_1, \ldots, D_n$ denote the columns of $D$. Since $G$ commutes with $D$, it follows from \cite[Theorem 3.4 and Theorem 7.3]{K_tropgroups} that left multiplication by $G$ restricts to a linear automorphism of the column space of $D$. It is readily verified that linear isomorphisms preserve the Hilbert metric, so that the left action of $G$ induces an isometry on the finite metric space consisting of the columns of $D$ with respect to $d_H$. In other words,
$$d_H(G\otimes D_i, G \otimes D_j) = d_H(D_i, D_j),$$
for all $i$ and $j$.

Now, $G$ is a unit, so we can write $G=SP$, where $S$ is a diagonal matrix,
say with entries $S_{i,i} = \lambda_i$, and $P$ is a permutation matrix,
say with $P_{\sigma(i), i} = 0$ and $P_{j,i} = -\infty$ for $j \neq \sigma(i)$ for some $\sigma \in S_n$. Since $GD = DG$, for every $i$ we have
$$G \otimes D_i = (GD)_i = (DG)_i = \lambda_i D_{\sigma(i)}.$$
It then follows from the fact that the Hilbert metric is defined on projective space that for every $i$ and $j$,
\begin{eqnarray*}
d_H(D_i, D_j)&=& d_H(G \otimes D_i, G\otimes  D_j)\\
&=& d_H(\lambda_i D_{\sigma(i)}, \lambda_j D_{\sigma(j)})\\
&=& d_H(D_{\sigma(i)}, D_{\sigma(j)}).
\end{eqnarray*}

By Corollary~\ref{cor_metric_rows}, the finite metric space consisting of the columns of $D$ with respect to $d_H$ is isometric to $([n], d)$, giving
$$d(i,j) = d_H(D_i, D_j) = d_H(D_{\sigma(i)}, D_{\sigma(j)}) = d(\sigma(i), \sigma(j)),$$
for all $i$ and $j$. Thus $\sigma$ is an isometry of $([n], d)$ and, by Lemma~\ref{lem_isoms}, $PD = DP$.

But if $P$ and $G$ both commute with $D$, then so does the diagonal matrix $S = G P^{-1}$. Clearly the only diagonal matrices that commute with
$D$ (or any finite matrix) are the scalar matrices, so we conclude that $S = \lambda I_n$ for some $\lambda \in \ft$, and hence $G = \lambda P$.
\end{proof}

\begin{theorem}
\label{thm_isomgp}
Let $d:[n] \times [n] \rightarrow \mathbb{R}$ be a metric and let $D$ denote the $n \times n$ matrix given by $D = (-d(i,j))$. Then the $\GreenH$-class of $D$ is isomorphic to $I \times \mathbb{R}$, where $I$ is the isometry group of the finite metric space $([n],d)$.
\end{theorem}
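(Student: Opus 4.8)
The plan is to combine the structural results already established. By the discussion preceding the theorem, the $\GreenH$-class of the idempotent $D$ is isomorphic to the group $U(D)$ of all units $G \in M_n(\trop)$ commuting with $D$, so it suffices to show $U(D) \cong I \times \mathbb{R}$. First I would invoke Lemma~\ref{lem_perms}: every such unit $G$ factors as $G = \lambda \otimes P$ for some $\lambda \in \ft$ and some permutation matrix $P$. Since $\lambda \otimes I_n$ commutes with every matrix, $G$ commutes with $D$ if and only if $P$ does, and by Lemma~\ref{lem_isoms} this happens if and only if the underlying permutation $\sigma$ is an isometry of $([n],d)$. Thus the factorisation $G = \lambda \otimes P$ realises $U(D)$ as the set $\{\lambda \otimes P : \lambda \in \mathbb{R}, \ P \text{ a permutation matrix of an isometry}\}$.

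Next I would make the isomorphism explicit. Define $\varphi : U(D) \to I \times \mathbb{R}$ by $\varphi(\lambda \otimes P) = (\sigma, \lambda)$, where $\sigma \in I$ is the isometry corresponding to $P$. The key point is that the factorisation $G = \lambda \otimes P$ is \emph{unique}: the scalar $\lambda$ is recovered as the (common) finite entry appearing in each column of $G$, and $P$ as $(-\lambda) \otimes G$, so $\varphi$ is well-defined and injective. It is surjective because every isometry $\sigma$ yields a permutation matrix commuting with $D$ (Lemma~\ref{lem_isoms}) and every $\lambda \in \mathbb{R}$ is allowed. To check that $\varphi$ is a homomorphism, I would compute
$$(\lambda \otimes P)(\mu \otimes Q) = (\lambda + \mu) \otimes (PQ),$$
using that scalars are central; since composition of the underlying permutations corresponds to the product $PQ$ and addition of scalars corresponds to $\lambda + \mu$, we get $\varphi(GH) = (\sigma\tau, \lambda+\mu) = \varphi(G)\varphi(H)$, as required.

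The direct-product structure $I \times \mathbb{R}$ then follows because the two coordinates vary independently: the scalar part $\lambda$ ranges over all of $(\mathbb{R}, +) \cong \mathbb{R}$ and the permutation part over all of $I$, with the group operation separating cleanly into the two factors by the computation above. Since $I$ is finite and $\mathbb{R}$ is the additive group of reals, this completes the identification of the $\GreenH$-class with $I \times \mathbb{R}$.

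I expect the main substance of the argument to lie entirely in Lemma~\ref{lem_perms}, which is assumed; given that structural input, the remaining work is the routine but necessary verification that the factorisation is unique and that $\varphi$ respects the group operations. The one point demanding a little care is confirming that $I$ genuinely appears as a direct factor rather than a semidirect one: this is guaranteed precisely because the scalar matrices are central in $M_n(\trop)$, so conjugation by a permutation part never disturbs the scalar part, and the two commute.
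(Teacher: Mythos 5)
Your proof is correct, and it takes a mildly but genuinely different route from the paper's. Both arguments rest on the same two key lemmas: Lemma~\ref{lem_perms} (every unit commuting with $D$ has the form $\lambda \otimes P$) and Lemma~\ref{lem_isoms} (a permutation matrix commutes with $D$ exactly when its permutation is an isometry of $([n],d)$). The difference is in how the direct-product structure is obtained. The paper does not argue from the fact you start with; it instead cites a stronger black box, Theorem 7.9 of \cite{K_tropgroups}, which already delivers $H_D \cong \mathbb{R} \times \Sigma$ where $\Sigma$ is the group of units of eigenvalue $0$ commuting with $D$. Given that, the paper only needs to note that by Lemma~\ref{lem_perms} an eigenvalue-$0$ unit commuting with $D$ must have $\lambda = 0$, i.e.\ be a permutation matrix, and then apply Lemma~\ref{lem_isoms}. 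You start instead from the weaker statement actually quoted in the section's preamble (Theorem 7.3 of \cite{K_tropgroups}: the $\GreenH$-class of an idempotent is isomorphic to the group of \emph{all} units commuting with it) and build the splitting by hand, via the uniqueness of the factorisation $G = \lambda \otimes P$, the centrality of scalar matrices, and the computation $(\lambda \otimes P)(\mu \otimes Q) = (\lambda + \mu) \otimes (PQ)$. What your route buys is self-containedness --- it uses only the external result the paper states in its text, and it exhibits the isomorphism explicitly, including why the product is direct rather than semidirect; what the paper's route buys is brevity, at the cost of leaning on a heavier imported theorem. The one point you gloss is the convention making $\sigma \mapsto P_\sigma$ a homomorphism rather than an anti-homomorphism: with the convention $P_{\sigma(i),i} = 0$ used in the paper one checks $P_\sigma \otimes P_\tau = P_{\sigma\tau}$, so this is harmless but worth a line.
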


\begin{proof}
Let $H_D$ denote the $\GreenH$-class of $D$. By \cite[Theorem 7.9]{K_tropgroups}, $H_D \cong \mathbb{R} \times \Sigma$, where $\Sigma$ is the group of all units $G$ such that  $G$ has eigenvalue 0 and $G$ commutes with $E$. It follows immediately from Lemma~\ref{lem_perms} that $\Sigma$ is the set of all permutation matrices commuting with $E$. Thus, by Lemma~\ref{lem_isoms}, we see that $\Sigma \cong I$.
\end{proof}

\begin{corollary}\label{cor_allsubgroups}
Every group of the form $G \times \mathbb{R}$ with $G$ finite occurs as a maximal subgroup of $M_n(\ft)$ for sufficiently large $n$.
\end{corollary}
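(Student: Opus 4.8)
The plan is to combine the realisation of finite groups as isometry groups of finite metric spaces with Theorem~\ref{thm_isomgp} and the standard identification of maximal subgroups with $\GreenH$-classes of idempotents; essentially all of the substantive work has already been done in the preceding sections, so this corollary is a matter of assembling the pieces.

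First I would invoke Asimov's theorem \cite{Asimov76}: given a finite group $G$, there is a finite metric space $([m],d)$ whose isometry group is isomorphic to $G$. Setting $D = (-d(i,j))$, Theorem~\ref{metricmatrix} guarantees that $D$ is a strongly regular symmetric idempotent in $M_m(\ft)$. Now Theorem~\ref{thm_isomgp} identifies the $\GreenH$-class $H_D$ of $D$ with $I \times \mathbb{R}$, where $I$ is the isometry group of $([m],d)$; by the choice of $d$ this is $G \times \mathbb{R}$. Since $D$ is idempotent, $H_D$ is precisely the maximal subgroup of $M_m(\ft)$ containing $D$, using the standard semigroup-theoretic fact recalled in Section~\ref{sec_duality} that the $\GreenH$-classes of idempotents are exactly the maximal subgroups. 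This already exhibits $G \times \mathbb{R}$ as a maximal subgroup of $M_m(\ft)$ for the particular $m$ produced by Asimov's construction.

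To obtain the stronger assertion that this holds for all sufficiently large $n$, I would pad the metric space. Given a metric space realising $G$ on $m$ points, adjoin a new point at a common distance $R$ from every existing point, choosing $R$ strictly larger than the diameter. One checks that the enlarged function is still a metric, and that the new point is the unique point all of whose distances equal the maximal value $R$; hence every isometry must fix it and restrict to an isometry of the original space, while conversely every isometry of the original space extends by fixing the new point. The isometry group is therefore unchanged. Iterating this construction yields metric spaces realising $G$ on $m, m+1, m+2, \dots$ points, and applying the previous paragraph to each gives maximal subgroups isomorphic to $G \times \mathbb{R}$ in $M_n(\ft)$ for every $n \geq m$.

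I do not anticipate any genuine obstacle: the only point requiring care is the verification in the padding step that adjoining a single far-away equidistant point neither creates nor destroys isometries, which is a short and elementary argument. Everything else is a direct citation of Theorem~\ref{metricmatrix}, Theorem~\ref{thm_isomgp} and \cite{Asimov76}.
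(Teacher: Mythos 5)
Your first paragraph is precisely the paper's own proof: the paper likewise combines Asimov's theorem (in its quantitative form, giving $|G|(|G|-1)$ points), the passage to the idempotent $D$, and Theorem~\ref{thm_isomgp} together with the identification of maximal subgroups with $\GreenH$-classes of idempotents. Where you genuinely diverge is your second paragraph: the paper stops after exhibiting the \emph{single} value $n = |G|(|G|-1)$, in effect reading ``for sufficiently large $n$'' as ``for some $n$'', whereas your padding construction (adjoining a point equidistant from all others at distance $R$ exceeding the diameter) establishes the statement under its literal reading, producing a maximal subgroup isomorphic to $G \times \mathbb{R}$ in $M_n(\ft)$ for \emph{every} $n \geq |G|(|G|-1)$. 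This buys something the paper does not have: it shows the set of $n$ for which $G \times \mathbb{R}$ arises is upward closed, which is directly relevant to the paper's closing remark about the smallest such $n$ (the paper only bounds that minimum, and says nothing about what happens between the minimum and infinity).

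One caveat you should patch: the padding step requires the base space to have at least two points. Your uniqueness claim --- that the new point is the only one all of whose distances equal $R$ --- uses the fact that every old point has some distance strictly less than $R$, which fails for a one-point space: padding a one-point space yields a two-point space whose isometry group is $\mathbb{Z}/2\mathbb{Z}$, not trivial. Since Asimov's construction gives $k(k-1)$ points, this is harmless when $|G| \geq 2$, but for the trivial group the iteration as stated breaks down; it is repaired by instead starting from, say, a three-point space with pairwise distinct distances (which has trivial isometry group), after which your argument goes through verbatim.
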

\begin{proof}
By a result of Asimov \cite{Asimov76}, every finite group $G$ is isomorphic
to the isometry group of a finite metric space; in fact, it is shown
that if $G$ has cardinality $k$, there exist $k(k-1)$ points in Euclidean
$(k-1)$-space such that $G$ is isomorphic to the isometry group of this
finite metric space. Let $n=k(k-1)$ and let $D$ denote the $n \times n$
idempotent tropical matrix corresponding to this finite metric. By
Theorem~\ref{thm_isomgp}, the corresponding maximal subgroup of $M_n(\ft)$
is isomorphic to $G \times \mathbb{R}$.
\end{proof}

In \cite{K_tropgroups}, Izhakian and the authors showed that every subgroup of $M_n(\ft)$ is isomorphic to $G \times \mathbb{R}$ for
some finite group $G$. Combining this with Corollary~\ref{cor_allsubgroups} yields:

\begin{corollary}
The maximal subgroups of the semigroups of the form $M_n(\ft)$ are exactly the groups of the form $G \times \mathbb{R}$ for $G$ finite.
\end{corollary}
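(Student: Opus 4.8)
The plan is to establish the claimed set equality by verifying the two containments separately, each of which follows almost immediately from a result already in hand. Note first that the assertion concerns the \emph{collection} of semigroups $M_n(\ft)$ as $n$ ranges over all positive integers, so what must be shown is that the set of isomorphism types of maximal subgroups occurring across all of the $M_n(\ft)$ coincides exactly with the set of groups of the form $G \times \mathbb{R}$ with $G$ finite.

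For the forward containment, I would observe that any maximal subgroup of a given $M_n(\ft)$ is \emph{a fortiori} a subgroup of $M_n(\ft)$, and then invoke the result of Izhakian and the authors from \cite{K_tropgroups}, recalled immediately before the statement, that every subgroup of $M_n(\ft)$ is isomorphic to $G \times \mathbb{R}$ for some finite group $G$. This shows at once that every maximal subgroup arising in any $M_n(\ft)$ has the required form, giving one inclusion.

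For the reverse containment, I would appeal directly to Corollary~\ref{cor_allsubgroups}: given an arbitrary finite group $G$, that corollary produces an integer $n$ (explicitly $n = k(k-1)$, where $k = |G|$, using Asimov's realisation of $G$ as the isometry group of a finite metric space, together with Theorem~\ref{thm_isomgp}) for which $G \times \mathbb{R}$ is a maximal subgroup of $M_n(\ft)$. Hence every group of the form $G \times \mathbb{R}$ with $G$ finite does occur as a maximal subgroup of some $M_n(\ft)$, giving the other inclusion.

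Combining the two inclusions yields the exact characterisation. I anticipate no genuine obstacle: the corollary is a clean synthesis of Corollary~\ref{cor_allsubgroups} with the cited structural theorem of \cite{K_tropgroups}, and the substantive work has already been carried out in establishing those two results. The only point meriting care is that the forward direction relies on the stronger statement from \cite{K_tropgroups} covering \emph{all} subgroups rather than merely maximal ones; since a maximal subgroup is in particular a subgroup, this requires no additional argument, and the proof is therefore essentially a one-line combination of the two facts.
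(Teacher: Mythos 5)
Your proposal is correct and follows exactly the paper's own argument: the paper derives this corollary precisely by combining the result from \cite{K_tropgroups} that \emph{every} subgroup of $M_n(\ft)$ is isomorphic to $G \times \mathbb{R}$ with $G$ finite (which covers the maximal ones) with Corollary~\ref{cor_allsubgroups} for the converse realisation. Nothing is missing; your note that the forward direction uses the stronger all-subgroups statement matches the paper's phrasing.
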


We note that our results do not quite completely classify the maximal subgroups of each individual full matrix semigroup $M_n(\ft)$. If
$G$ is a finite group, then the smallest $n$ such that $G \times \mathbb{R}$ occurs a maximal subgroup of $M_n(\ft)$ is bounded below by the
permutation degree of $G$ (by \cite{K_tropgroups}) and bounded above by $|G|(|G|-1)$ (from the proof of Corollary~\ref{cor_allsubgroups}). It
is an interesting open question whether these bounds are tight or if (as we suspect) this rather large gap can be narrowed.

\section{Examples}\label{sec_examples}
In this section we study in detail a few elementary examples of projective tropical
polytopes in low dimension, and show how the concepts and results of this
paper apply to them.

We consider first polytopes in $\ft^2$. The two-dimensional case is very
much degenerate and our results specialised to this case can be obtained
by simpler means; nevertheless it still informative to see how the
results manifest themselves.
It follows from the results of \cite{K_tropicalgreen} and \cite{K_puredim}
that every two-dimensional tropical polytope is projective. Projective
polytopes (indeed, two-dimensional tropical polytopes in general), do not
display the ``dimension anomaly'' which appears
in higher dimensions: every polytope in $\ft^2$ is thus either a projective
$1$-polytope or a projective $2$-polytope. The projective $1$-polytopes
are exactly the (classical) lines of gradient 1 in the plane, while the
projective $2$-polytopes are closed connected
regions bounded by lines of gradient 1. These are shown in Figure~\ref{fig2d}.

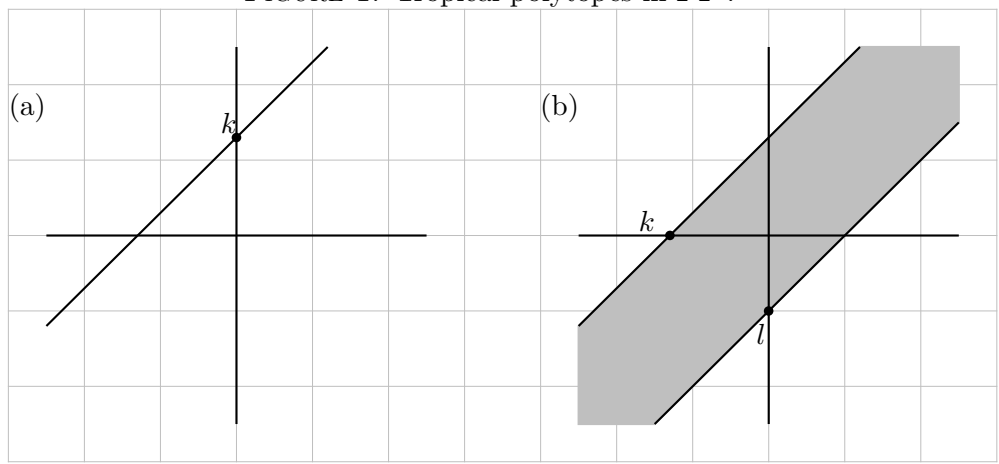
\begin{figure}[h]
\caption{Tropical polytopes in $\ft^{2}$.}
\begin{pspicture}(13,6)
\psgrid[gridcolor=lightgray, gridwidth=0.25pt, gridlabels=0pt, subgridwidth=0.25pt, subgriddiv=1](0,0)(13,6)
\rput(0.25,4.7){(a)}
\qline(0.5,3)(5.5,3)
\qline(3,0.5)(3,5.5)
\qline(0.5,1.8)(4.2,5.5)
\rput(2.9,4.5){$k$}
\psdots*(3,4.3)
\rput(7.25,4.7){(b)}
\pspolygon[fillstyle=solid, fillcolor=lightgray, linecolor=lightgray](7.5,1.8)(7.5,0.5)(8.5,0.5)(12.5,4.5)(12.5,5.5)(11.2,5.5)
\qline(7.5,3)(12.5,3)
\qline(10,0.5)(10,5.5)
\qline(7.5,1.8)(11.2,5.5)
\qline(8.5,0.5)(12.5,4.5)
\psdots*(8.7,3)(10,2)
\rput(8.4,3.2){$k$}
\rput(9.9,1.7){$l$}
\end{pspicture}
\label{fig2d}
\end{figure}

Theorem~\ref{numidmpt} describes the number of idempotents having a given polytope as column space.
Each projective $1$-polytope in $\ft^2$ is the column space of continuum-many distinct idempotents; these are of less interest to us here, but a complete description can be found in \cite{K_tropicalgreen}.
Each projective $2$-polytope, by contrast, is the column space of a unique idempotent, and it is these with
which we are primarily concerned. These idempotents have the form
$$E = \left(\begin{array}{cc}
0&k\\
l&0
\end{array}\right).$$
where $k+l < 0$. In terms of the corresponding polytope $C(E)$, $k$ and $l$ are the $x$-intercept
of the upper boundary and $y$-intercept of the lower boundary, as marked in Figure~\ref{fig2d}.

This matrix corresponds to the asymmetric distance function on the set $\lbrace 1,2 \rbrace$ given by $d(1,1) = d(2,2) = 0$,
$d(1,2) = -k$ and $d(2,1) = -l$. Note that the condition $k+l < 0$ (necessary to ensure that the matrix
is idempotent of rank $2$) ensures that this function satisfies the triangle inequality. It will be
a semimetric provided $k, l < 0$; geometrically this can be seen to happen exactly if the origin
$(0,0)$ lies in the interior of $C(E)$, as expected by Theorem~\ref{semimetricmatrix}.
The function will be a metric if in addition $k=l$. Geometrically, this can be seen to happen if
$C(E)$ has rotational symmetry through an angle of $\pi$ around the origin, that is, if $C(E) = -C(E)$,
as predicted by Theorem~\ref{thm_negclose}.

We now consider some examples of polytopes in higher dimensions.

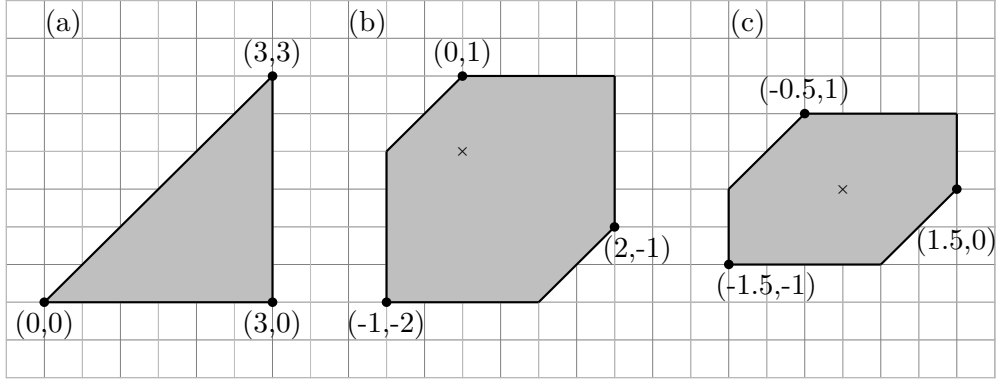
\begin{figure}[h]
\caption{Some tropical polytopes in $\pft^{2}$.}
\begin{pspicture}(14,5)
\psgrid[gridcolor=lightgray, gridwidth=0.25pt, gridlabels=0pt, subgridwidth=0.25pt, subgriddiv=2](0,0)(13,5)
\rput(0.75,4.7){(a)}
\pspolygon[fillstyle=solid, fillcolor=lightgray, linecolor=lightgray](0.5,1)(3.5,1)(3.5,4)
\qline(0.5,1)(3.5,1)
\qline(0.5,1)(3.5,4)
\qline(3.5,1)(3.5,4)
\psdots*(0.5,1)(3.5,1)(3.5,4)
\rput(0.5,0.7){(0,0)}
\rput(3.5,0.7){(3,0)}
\rput(3.5,4.3){(3,3)}
\rput(4.75,4.7){(b)}
\pspolygon[fillstyle=solid, fillcolor=lightgray, linecolor=lightgray](5,3)(6,4)(8,4)(8,2)(7,1)(5,1)
\qline(5,1)(5,3)
\qline(5,3)(6,4)
\qline(6,4)(8,4)
\qline(8,4)(8,2)
\qline(8,2)(7,1)
\qline(7,1)(5,1)
\psdots*(5,1)(6,4)(8,2)
\rput(5,0.7){(-1,-2)}
\rput(6,4.3){(0,1)}
\rput(8.3,1.7){(2,-1)}
\psdot[dotstyle=x](6,3)
\rput(9.75,4.7){(c)}
\pspolygon[fillstyle=solid, fillcolor=lightgray, linecolor=lightgray](9.5,2.5)(10.5,3.5)(12.5,3.5)(12.5,2.5)(11.5,1.5)(9.5,1.5)
\qline(9.5,1.5)(9.5,2.5)
\qline(9.5,2.5)(10.5,3.5)
\qline(10.5,3.5)(12.5,3.5)
\qline(12.5,3.5)(12.5,2.5)
\qline(12.5,2.5)(11.5,1.5)
\qline(11.5,1.5)(9.5,1.5)
\psdots*(9.5,1.5)(10.5,3.5)(12.5,2.5)
\rput(10,1.2){(-1.5,-1)}
\rput(10.5,3.8){(-0.5,1)}
\rput(12.5,1.8){(1.5,0)}
\psdot[dotstyle=x](11,2.5)
\end{pspicture}
\label{fig3poly}
\end{figure}

Figure~\ref{fig3poly} shows the projectivisations of three different projective $3$-polytopes in $\ft^3$. They
clearly have pure dimension $3$, and so by Theorems~\ref{IJKmain} and
\ref{numidmpt}, each is the column
space of a unique idempotent matrix. In fact the three idempotents are
$$\left(\begin{array}{c c c}
0&0&0\\
-3&0&0\\
-3&-3&0
\end{array}\right), \ \
\left(\begin{array}{c c c}
0&-1&-1\\
-3&0&-2\\
-2&-1&0
\end{array}\right) \text{  and  }
\left(\begin{array}{c c c}
0&-1.5&-1.5\\
-1.5&0&-1\\
-1.5&-1&0
\end{array}\right)
$$
respectively.

The origin $(0,0,0)$ lies on the boundary of polytope (a) but in the interiors of polytopes (b) and (c).
Thus, by Theorem~\ref{semimetricmatrix}, polytopes (b) and (c) correspond to semimetrics on three points,
while polytope (a) does not. The values of the semimetrics in question are given by the negating the entries in the
corresponding idempotent matrices; note that the idempotent corresponding to polytope (a) contains $0$ off the main diagonal
and so does not yield a semimetric.

Theorem~\ref{thm_negclose} tells us that polytopes corresponding to \textit{metrics}
must be closed under negation. Geometrically, this means they must have
rotational symmetry through an angle of
$\pi$ around the origin in projective space. Polytope (b) in Figure~\ref{fig3poly} is not closed under negation (as seen from the fact it is not centred around the origin). Polytope (c), on the other hand, is
closed under negation and so by Theorem~\ref{thm_negclose} the associated semimetric is a
metric. This is evident in the symmetry of the corresponding idempotent. In fact the only
projective tropical $3$-polytopes in $\ft^3$ which are negation-closed are (classical) hexagons
and parallelograms centred on the origin, as illustrated in
Figure~\ref{fig_3metric}.

The parallelograms, which are degenerate forms of the hexagon,
arise when the three points in the metric space are collinear, that is, when one of the
distances between pairs is the sum of the other two.

\begin{figure}[h]
\caption{Tropical polytopes in $\pft^{2}$ corresponding to metrics on three points.}
\begin{pspicture}(11,3)
\psgrid[gridcolor=lightgray, gridwidth=0.25pt, gridlabels=0pt, subgridwidth=0.25pt, subgriddiv=2](0,0)(11,3)
\pspolygon[fillstyle=solid, fillcolor=lightgray, linecolor=lightgray](0.5,0.5)(0.5,2.5)(3.5,2.5)(3.5,0.5)
\qline(0.5,0.5)(0.5,2.5)
\qline(0.5,0.5)(3.5,0.5)
\qline(0.5,2.5)(3.5,2.5)
\qline(3.5,2.5)(3.5,0.5)
\psdots(0.5,0.5)(0.5,2.5)(3.5,0.5)
\psdot[dotstyle=x](2,1.5)
\pspolygon[fillstyle=solid, fillcolor=lightgray, linecolor=lightgray](4,0.5)(4,1)(5.5,2.5)(7,2.5)(7,2)(5.5,0.5)
\qline(4,0.5)(4,1)
\qline(4,1)(5.5,2.5)
\qline(5.5,2.5)(7,2.5)
\qline(7,2.5)(7,2)
\qline(7,2)(5.5,0.5)
\qline(5.5,0.5)(4,0.5)
\psdots(4,0.5)(5.5,2.5)(7,2)
\psdot[dotstyle=x](5.5,1.5)
\pspolygon[fillstyle=solid, fillcolor=lightgray, linecolor=lightgray](7.5,0.5)(9.5,2.5)(10.5,2.5)(8.5,0.5)
\qline(7.5,0.5)(9.5,2.5)
\qline(9.5,2.5)(10.5,2.5)
\qline(10.5,2.5)(8.5,0.5)
\qline(8.5,0.5)(7.5,0.5)
\psdots(7.5,0.5)(9.5,2.5)(10.5,2.5)
\psdot[dotstyle=x](9,1.5)
\end{pspicture}
\label{fig_3metric}
\end{figure}
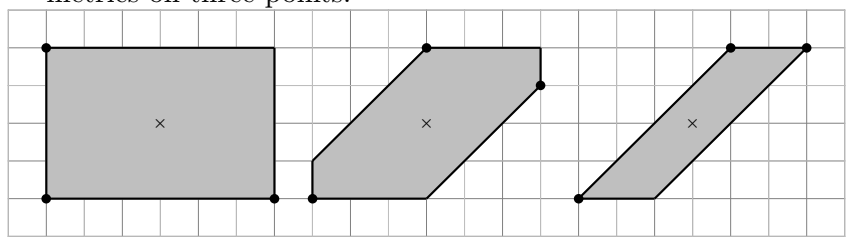

\begin{figure}[h]
\caption{A non-Euclidean metric on four points.}
\begin{pspicture}(4,4.5)
\pspolygon(0,0)(4,0)(2,4)
\pspolygon(0,0)(4,0)(2,1.8)
\pspolygon(0,0)(2,1.8)(2,4)
\rput(-0.25,-0.25){c}
\rput(4.25,-0.25){b}
\rput(2,4.25){a}
\rput(2,1.5){d}
\rput(1,0.7){1}
\rput(3,0.7){1}
\rput(2.1,2.7){1}
\rput(0.8,2){2}
\rput(3.2,2){2}
\rput(2,-0.25){2}
\end{pspicture}
\label{fig_noneuclid}
\end{figure}
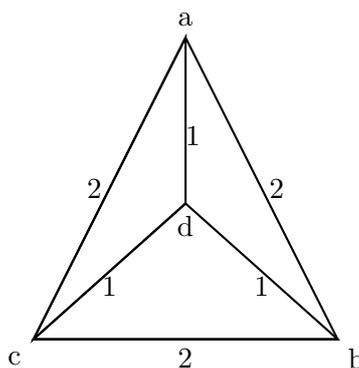

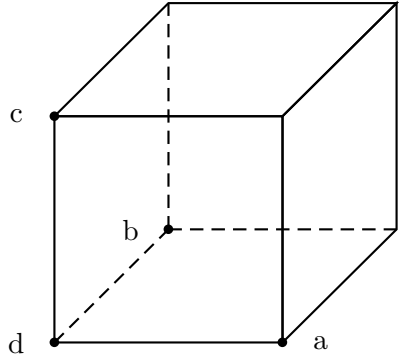
\begin{figure}
\caption{An embedding of the non-Euclidean metric from Figure 4 into $\mathbb{PFT}^3$.}
\begin{pspicture}(-2,-2)(4,3.5)
\psdots(0,0)(-1.5,-1.5)(-1.5,1.5)(1.5,-1.5)
\pspolygon(-1.5,-1.5)(-1.5,1.5)(1.5,1.5)(1.5,-1.5)
\pspolygon(-1.5,1.5)(1.5,1.5)(3,3)(0,3)
\pspolygon(1.5,-1.5)(1.5,1.5)(3,3)(3,0)
\psline[linestyle=dashed](-1.5,-1.5)(0,0)
\psline[linestyle=dashed](0,0)(0,3)
\psline[linestyle=dashed](0,0)(3,0)
\rput(-0.5,0){b}
\rput(-2,-1.5){d}
\rput(-2,1.5){c}
\rput(2,-1.5){a}
\end{pspicture}
\label{fig_cube}
\end{figure}

Another example is given by the $4$-point metric space illustrated schematically in
Figure~\ref{fig_noneuclid}. This clearly cannot be embedded isometrically into Euclidean space
of any dimension. Indeed, the uniqueness of geodesics in Euclidean space would force the image of $d$
under such an embedding to
lie on all three sides of a non-degenerate triangle with vertices the images of $a$, $b$ and $c$,
which is clearly impossible.
However, this metric can be isometrically embedded into $\ft^4$ via the map
\begin{align*}
&a \mapsto (0, -2, -2, -1), \hspace{6ex}  &b \mapsto (-2, 0, -2, -1) \\
&c \mapsto (-2, -2, 0, -1),        & d \mapsto (-1, -1, -1, 0).
\end{align*}
In projective space $\pft^3$, these points are four vertices of a (Euclidean) cube, which in
fact is their tropical convex hull. This is shown in Figure~\ref{fig_cube}.

\section*{Acknowledgements}
This research was supported by EPSRC grant number EP/H000801/1 (\textit{Multiplicative Structure of Tropical Matrix Algebra}).

\bibliographystyle{plain}

\def\cprime{$'$} \def\cprime{$'$}

\end{document}